\newtheorem{theorem}{Theorem}[section]
\newtheorem{lemma}[theorem]{Lemma}
\newtheorem{corollary}[theorem]{Corollary}
\newtheorem{remark}[theorem]{Remark}
\newenvironment{acknowledgements}[1][Acknowledgements:]{\begin{trivlist}
\item[\hskip \labelsep {\bfseries #1}]}{\end{trivlist}}
\begin{document}

\begin{abstract}
Let $B$ be a finite, separable von Neumann algebra. We prove that a $B$-valued 
distribution $\mu$ that is the weak limit of an infinitesimal array is infinitely divisible.  The proof of this theorem utilizes the 
Steinitz lemma and may be adapted to provide a nonstandard proof of this type of theorem for various other probabilistic categories.  We also develop weak topologies
for this theory and prove the corresponding compactness and convergence results.
\end{abstract}
\address{Texas A\&M University, Dept. of Mathematics, 
\\ Mail Stop 3368 , College Station, TX 77843-3368 
\\jwilliams@math.tamu.edu}
\title[Operator-Valued Infinite Divisibility.]{An Analogue of Hin\u{c}in's Characterization of Infinite Divisibility for Operator-Valued Free Probability.}
\author{John D. Williams}
\maketitle

\section{Introduction.}
As a starting point, we recall a theorem, due to Hin\u{c}in in classical probability theory.
Let $\mathcal{M}$ denote the collection of all probability measures on $\mathbb{R}$.  Given $\mu, \nu \in \mathcal{M}$, 
we denote by $\mu \ast \nu$ the distribution of $X + Y$ where $X$ and $Y$ are independent random variables with
distribution $\mu$ and $\nu$ respectively.  For $r\in \mathbb{R}$ we denote by $\delta_{r}\in \mathcal{M}$ the 
Dirac mass at $r$.
\begin{theorem}\label{classical_H}
Let $\{\mu_{ij} \}_{i\in\mathbb{N},j=1,\ldots,n_{i}}$ and  $\mu$ be probility measures on $\mathbb{R}$.
Assume that the following properties hold:
\begin{enumerate}
\item
For every $\epsilon >0$, we have that $\lim_{i\uparrow \infty} \mu_{ij}([-\epsilon,\epsilon])\rightarrow 1$.
\item
There exists a sequence $\{r_{i} \}_{i\in \mathbb{N}} \subset \mathbb{R}$ such that 
$\delta_{r_{i}}\ast \mu_{i1} \ast \cdots \ast \mu_{in_{1}} \rightarrow \mu$ weakly.
\end{enumerate}
Then, for every $N\in \mathbb{N}$, there exists a probability measure $\mu_{1/N}$ such 
that $\mu = \mu_{1/N} \ast \cdots \ast \mu_{1/N}$, where the convolution on the right hand side is $N$-fold.
\end{theorem}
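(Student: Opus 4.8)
The natural route is through the Fourier transform together with the classical method of accompanying infinitely divisible laws. Writing $\widehat{\rho}(t)=\int_{\mathbb{R}}e^{itx}\,d\rho(x)$ for the characteristic function of $\rho\in\mathcal{M}$, I would first recast both hypotheses analytically. By the L\'evy continuity theorem, condition (2) is equivalent to the locally uniform convergence
\begin{equation*}
e^{itr_i}\prod_{j=1}^{n_i}\widehat{\mu_{ij}}(t)\longrightarrow \widehat{\mu}(t),
\end{equation*}
while the infinitesimality hypothesis (1), read as $\max_{j}\,\mu_{ij}\bigl(\mathbb{R}\setminus[-\epsilon,\epsilon]\bigr)\to 0$ for each $\epsilon>0$, translates into $\max_{j}\bigl|\widehat{\mu_{ij}}(t)-1\bigr|\to 0$ uniformly for $t$ in compact sets, since mass concentrating near the origin forces the transform toward $1$.

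The crux of the argument is to replace each row by an \emph{accompanying} infinitely divisible law, which also conveniently absorbs the centering constants $r_i$ as Dirac shifts. For a finite signed measure $\sigma$ set $\exp_{\ast}(\sigma)=\sum_{k\ge 0}\sigma^{\ast k}/k!$, and define
\begin{equation*}
\nu_i=\delta_{r_i}\ast\lambda_{i1}\ast\cdots\ast\lambda_{in_i},\qquad \lambda_{ij}=\exp_{\ast}\!\bigl(\mu_{ij}-\delta_0\bigr).
\end{equation*}
Each $\lambda_{ij}=e^{-1}\sum_{k}\mu_{ij}^{\ast k}/k!$ is a compound Poisson distribution, hence infinitely divisible; a shifted convolution of infinitely divisible laws is again infinitely divisible, so every $\nu_i$ is infinitely divisible, an explicit $N$-th root being obtained by rescaling the Poisson intensities and $r_i$ by $1/N$. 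I would then show that $\nu_i$ converges to the same limit $\mu$. Indeed $\widehat{\nu_i}(t)=e^{itr_i}\exp\bigl(\sum_{j}(\widehat{\mu_{ij}}(t)-1)\bigr)$, and comparing this with the genuine product $\prod_j\widehat{\mu_{ij}}(t)$ via the elementary estimate for $\bigl|\prod_j(1+a_j)-\exp(\sum_j a_j)\bigr|$, valid when $\max_j|a_j|$ is small and $\sum_j|a_j|$ is bounded, yields $\widehat{\nu_i}-e^{itr_i}\prod_j\widehat{\mu_{ij}}\to 0$ locally uniformly, whence $\widehat{\nu_i}\to\widehat{\mu}$ and $\nu_i\to\mu$ weakly.

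Finally, I would invoke the fact that the class of infinitely divisible distributions is closed under weak convergence: since each $\nu_i$ is infinitely divisible and $\nu_i\to\mu$, the limit $\mu$ is infinitely divisible, and the desired factors $\mu_{1/N}$ (with $\mu=\mu_{1/N}^{\ast N}$) exist. The main obstacle I anticipate lies in the comparison step: controlling $\sum_j|\widehat{\mu_{ij}}(t)-1|$ uniformly on compacts requires knowing that $\widehat{\mu}(t)$ does not vanish, so that the partial products stay bounded away from $0$; establishing this nonvanishing (typically by symmetrization, bounding $\sum_j(1-|\widehat{\mu_{ij}}(t)|^2)$ and propagating nonvanishing from a neighborhood of the origin), together with the tightness needed to pass from the integrated tail bound in (1) to the transform estimate, is where the real work concentrates. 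The closure property in the last step, while standard, itself rests on the L\'evy--Hin\u{c}in representation and a compactness argument, and would need to be cited or reproved.
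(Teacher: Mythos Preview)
Your proposal follows the classical accompanying-laws route: replace each factor by a compound Poisson, show the accompanying row has the same weak limit, and invoke closure of the infinitely divisible class under weak convergence. The outline is correct, and the obstacles you flag (nonvanishing of $\widehat\mu$, uniform control of $\sum_j|\widehat{\mu_{ij}}(t)-1|$ on compacta via symmetrization) are exactly where the work sits in the standard treatment.

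The paper, however, does not prove Theorem~\ref{classical_H}; it is only recalled as a classical result. That said, the paper's own methodology---developed for the operator-valued free analogue in Theorem~\ref{mainresult} and explicitly noted in Section~\ref{conclusion} as adaptable to the classical case with $\log\widehat\mu$ in place of the Voiculescu transform---is genuinely different from yours. Rather than building accompanying infinitely divisible laws and appealing to closure, the paper exploits additivity of the linearizing transform together with the Steinitz lemma (Corollary~\ref{ST_euclidean}): since $\sum_j \log\widehat{\mu_{ij}}(t)$ converges with infinitesimal summands, one can select subsets $\sigma_i\subset\{1,\dots,n_i\}$ so that $\sum_{j\in\sigma_i}\log\widehat{\mu_{ij}}(t)$ converges to $1/N$ times the total on any prescribed finite set of test points, then diagonalize over such sets and pass to a cluster point of $\delta_{r_i/N}\ast\bigl(\ast_{j\in\sigma_i}\mu_{ij}\bigr)$ using compactness.

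What each buys: your argument is the textbook one and is short once the closure theorem (itself resting on the L\'evy--Hin\u{c}in representation) is granted, but it is non-constructive in that $\mu_{1/N}$ appears only as an abstract root. The Steinitz route avoids the closure theorem entirely and is constructive: $\mu_{1/N}$ is exhibited as a weak limit of partial convolutions drawn from the original array itself. This constructive feature is precisely what allows the method to transplant to the operator-valued setting of the paper, where no off-the-shelf closure theorem for the infinitely divisible class is available.
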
The conclusion of the above Theorem is the assertion that $\mu$ is \textit{infinitely divisible} with respect to 
the convolution operation.

Free probability was developed in the 80's by Voiculescu as a method for encoding free product
phenomenon in operator algebras in a probabalistic setting.  In \cite{vo2}, Voiculescu introduced
\textit{free indepence} which is a noncommutative analogue of classical independence.  The corresponding 
convolution operation, \textit{free additive convolution} (in symbols, $\boxplus$) was also introduced.  In particular,
if $X$ and $Y$ are freely independent random variables with respective distributions $\mu$ and $\nu$, then we denote
by $\mu \boxplus \nu$ the distribution of the random variable $X + Y$.

In this noncommutative setting, analogues of Theorems \ref{classical_H} have been developed.  Indeed, it was shown by Bercovici
and Pata in \cite{Pata} that the same result is true if classical independence is replaced by free independence and classical
infinite divisibility is replace by $\boxplus$-infinite divisibility.  Belinschi and Bercovici proved in \cite{Bel} that this
characterization of infinite divisibility held for \textit{multiplicative free convolution}, which arises when taking
the product of free random variables.  The primary focus of this paper will be an operator valued generalization of this theorem
for additive convolution.

In order to address amalgamated free product phenomenon in operator algebras with probabilistic methods, operator valued 
versions of free probability theory were developed in \cite{vop}, \cite{vo5} and \cite{vo6}.The purpose of
this paper is to prove Theorem \ref{mainresult}, which is a version of Theorem \ref{classical_H} for operator
valued free probability.  A weaker version of this type of theorem was proven in \cite{PVB} as a necessary intermediate lemma
in their proof of an operator valued generalization of the Bercovici-Pata bijection.

This paper is organized as follows.  Section $(2)$ contains preliminaries for $B$-valued distributions.
Section $(3)$ is devoted to the preliminaries of operator valued free probability.  Section $(4)$ contains
preliminary results related to those distributions that arise from tracial von Neumann algebras.  Section $(5)$ is devoted
to the Steinitz lemma which is the underlying tool in our approach to this theorem.  Section $(6)$ contains the main result
and section $(7)$ includes concluding remarks and acknowledgements.

\section{B-Valued Distributions.}

Let $B$ denote a unital $C^{\ast}$-algebra and $B\langle X \rangle$ the space of noncommutative polynomials over $B$.
  We say that a map $\mu: B\langle X \rangle \rightarrow B$ is \textit{completely positive} if for any finite set of
elements $P_{1}(X), \ldots, P_{n}(X) \in B\langle X \rangle$ we have that the matrix $$|\mu(P_{i}^{\ast}(X)P_{j}(X)) |_{i,j=1}^{n}$$ is a positive element of $M_{n}(B)$.
A map $\mu: B\langle X \rangle \rightarrow B$ is $B$-bimodular if $\mu(bP(X)b') = b\mu(P(X))b'$ for all $b,b' \in B$ and $P(X) \in B\langle X \rangle$.  We denote
by $\Sigma$ the space of all $B$-bimodular, completely positive, $B$ valued maps.

Let $\mu \in \Sigma$.  We say that $\mu$ is \textit{exponentially bounded} by $M$ if for all elements $b_{1}, \ldots, b_{n} \in B$, we have that
 $\| \mu(Xb_{1}X \cdots Xb_{n}X) \| \leq M^{n+1} \|b_{1} \| \cdots \|b_{n} \|$.  We denote by $\Sigma_{0}$ the set of all exponentially bounded elements
in $\Sigma$ and by $\Sigma_{0,M}$ the set of all such elements with a bound of $M$.  The elements $\mu \in \Sigma$ which arise as the distribution of elements of 
in a $C^{\ast}$-probability space are of primary interest in this paper.  We defer the development of the theory of these distributions to the next section.

We endow $\Sigma$ with the topology of pointwise weak convergence 
(that is $\mu_{\lambda} \rightarrow \mu$ if an only if $\mu_{\lambda} (P(X)) \rightarrow \mu(P(X))$
in the weak topology on $B$ for all $P(X) \in B\langle X \rangle$).  See \cite{PV} for a study of norm topologies on $\Sigma$.

\begin{lemma}\label{compact}
If we further assume that $B$ is a $W^{\ast}$-algebra, then the space $\Sigma_{0,M}$ is compact in the topology of pointwise weak convergence for all $M > 0$.
\end{lemma}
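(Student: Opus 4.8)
The plan is to realize $\Sigma_{0,M}$ as a closed subspace of a compact product and invoke Tychonoff's theorem. Since $B$ is a $W^{\ast}$-algebra it is the dual of its predual $B_{\ast}$, and the relevant weak topology is the weak-$\ast$ topology $\sigma(B,B_{\ast})$; by the Banach--Alaoglu theorem every norm ball $\overline{B}_{r}=\{b\in B:\|b\|\le r\}$ is then weak-$\ast$ compact. This is the one place where the $W^{\ast}$ hypothesis is genuinely needed, as a generic $C^{\ast}$-algebra is not a dual space. For each $P\in B\langle X\rangle$ I will produce a radius $C_{P}$ with $\|\mu(P)\|\le C_{P}$ for every $\mu\in\Sigma_{0,M}$, and then consider the evaluation embedding $\iota:\Sigma_{0,M}\to\prod_{P\in B\langle X\rangle}\overline{B}_{C_{P}}$, $\iota(\mu)=(\mu(P))_{P}$. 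By the very definition of the topology on $\Sigma_{0,M}$, $\iota$ is a homeomorphism onto its image, and the target is compact by Tychonoff, so it suffices to show that $\iota(\Sigma_{0,M})$ is closed in this product.

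To obtain the radii, note that every monomial of positive degree has the form $b_{0}Xb_{1}X\cdots Xb_{m}$, and $B$-bimodularity gives $\mu(b_{0}Xb_{1}\cdots Xb_{m})=b_{0}\,\mu(Xb_{1}X\cdots Xb_{m-1}X)\,b_{m}$; exponential boundedness then yields $\|\mu(b_{0}Xb_{1}\cdots Xb_{m})\|\le M^{m}\|b_{0}\|\|b_{1}\|\cdots\|b_{m}\|$. The constant term is handled by unitality, $\mu(1)=1$, so that $\mu(b_{0})=b_{0}$, and since each $P$ is a finite linear combination of monomials the triangle inequality produces a single finite constant $C_{P}$ valid for all $\mu\in\Sigma_{0,M}$. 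This uniform-bound step carries the quantitative content of the lemma; the remaining verifications are soft.

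Closedness of the image amounts to checking that the defining relations persist under pointwise weak-$\ast$ limits. Linearity and $B$-bimodularity are expressed by equations of the form $\mu(\alpha P+\beta Q)-\alpha\mu(P)-\beta\mu(Q)=0$ and $\mu(bPb')-b\,\mu(P)\,b'=0$; their left-hand sides are weak-$\ast$ continuous functions of the coordinates, because finite linear combinations and the fixed multiplications $x\mapsto bxb'$ are weak-$\ast$ continuous on $B$, so their zero sets are closed. Complete positivity requires each matrix $[\,\mu(P_{i}^{\ast}P_{j})\,]_{i,j=1}^{n}$ to lie in the positive cone of $M_{n}(B)$, which is weak-$\ast$ closed, and the assignment of coordinates to this matrix is weak-$\ast$ continuous, so this condition is closed as well. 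Finally, exponential boundedness is preserved because the norm is weak-$\ast$ lower semicontinuous: if $\mu_{\lambda}\to\mu$ with $\|\mu_{\lambda}(Xb_{1}X\cdots Xb_{n}X)\|\le M^{n+1}\|b_{1}\|\cdots\|b_{n}\|$, the same bound passes to $\mu$. Intersecting these closed conditions shows $\iota(\Sigma_{0,M})$ is closed in a compact space, hence compact. The point demanding the most care is precisely that each algebraic and positivity constraint be weak-$\ast$ closed rather than merely norm closed, since it is the weak-$\ast$ compactness of balls---available only because $B$ is a $W^{\ast}$-algebra---that drives the whole argument.
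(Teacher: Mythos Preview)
Your proof is correct and follows essentially the same strategy as the paper: embed $\Sigma_{0,M}$ via evaluation into a Tychonoff product of weak-$\ast$ compact balls (using Banach--Alaoglu, which requires the $W^{\ast}$ hypothesis) and then verify that bimodularity, complete positivity, and the exponential bound are weak-$\ast$ closed conditions. The only cosmetic difference is that the paper indexes the product by a spanning family of normalized monomials mapping into the single unit ball, whereas you index over all $P\in B\langle X\rangle$ with individual radii $C_{P}$ and must therefore also record linearity as a closed condition; both choices work equally well.
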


\begin{proof}
We follow the proof of Alaoglu's theorem.  We assume without loss of generality that $M=1$

Let $\{P_{i} \}_{i\in I} \subset B\langle X \rangle$ be a family of monomials with $B$-bilinear span equal to $B\langle X \rangle$ that possess the property
that each $P_{i}$ may be written as a product $P_{i} = Xb_{1}Xb_{2}\cdots Xb_{n}X$ with  $\| b_{i} \| \leq 1$ for $i=1,\ldots,n$.  
Let $K = \prod_{i\in I} B_{1}$, where $B_{1}$ denotes the unit ball in $B$, and endow this space $K$ with the product of the weak toplogies on $B$.  By 
Alaoglu and Tychonoff's theorems, K is compact.

Let $\Phi: \Sigma_{0,1} \rightarrow K$ by letting $\Phi(\mu) = (\mu(P_{i}))_{i\in I}$.  Our exponential bound condition implies that the image is indeed in $K$.
Since these elements $P_{i}$ have dense linear span,
this map is injective.  Observe that if $\mu_{\lambda} \rightarrow \mu$ in $\Sigma_{0,1}$ then for any $\sigma \subset I$ finite, we have that
$\mu_{\lambda}(P_{i}) \rightarrow \mu(P_{i})$ uniformly over $i\in \sigma$.  Thus, $\Phi$ is continuous.  Similarly, if $\Phi(\mu_{\lambda}) \rightarrow \Phi(\mu)$,
we have that $\mu_{\lambda}(P_{i}) \rightarrow \mu(P_{i})$ weakly so that $\Phi^{-1}$ is continuous.  Thus, $\Phi$ is a homeomorphism of $\Sigma_{0,1}$ onto its image.

Our lemma will follow when we show that $\Phi(\Sigma_{0,1})$ is a closed subset of $K$.  Assume that $\Phi(\mu_{\lambda}) \rightarrow (b_{i})_{i\in I}$.  We define a map
$\mu : B\langle X \rangle \rightarrow B$ by letting $\mu(P_{i}) = b_{i}$ and then extending this map by linearity.  Clearly, $\mu$ is well defined.  We claim that
$\mu \in \Sigma_{0,1}$.  To show this we must show that $\mu$ is $B$-bimodular, completely positive and has exponential bound equal to $1$.

First, observe that for $\phi \in B^{\ast}$ and $b,b' \in B$, we have 

 $$\phi(b\mu(P(X))b') - \phi(\mu(bP(X)b')) $$
$$ = \lim_{\lambda} (\phi(b\mu(P(X))b') - \phi(b\mu_{\lambda}(P(X))b') + (\phi(\mu_{\lambda}(bP(X)b')) -  \phi(\mu(bP(X)b'))$$
The right hand side of the equation is $0$ so that $\mu$ is $B$-bimodular.

Next, note that for a finite collection $\{f_{j} \}_{j=1}^{k} \subset B\langle X \rangle$, the matrix
$[\mu(f_{i}^{\ast}f_{j}) ]_{i,j=1}^{k}$ is the weak limit of $[\mu_{\lambda}(f_{i}^{\ast}f_{j}) ]_{i,j=1}^{k}$
with $M_{k}(B)$ endowed with the weak toplogy.  As the positive cone is weakly closed, the fact that $[\mu_{\lambda}(f_{i}^{\ast}f_{j}) ]_{i,j=1}^{k}$
is positive implies the same for $[\mu(f_{i}^{\ast}f_{j}) ]_{i,j=1}^{k}$.  Thus, $\mu$ is completely positive.

Lastly, for any monomial $P(X) = b_{1}Xb_{2}X\cdots b_{n}X$ in $B\langle X \rangle$, we have that $\mu_{\lambda}(P(X)) \leq \|b_{1} \| \cdots \|b_{n} \|$.
Alaoglu's theorem implies that $\mu(P(X))$ has the same bound.  Thus, our lemma holds.
\end{proof}

\section{Operator Valued Free Probability.}

Let $(A,\phi,B)$ be a triple with $B \subset A$ an inclusion of  $C^{\ast}$-algebras and $\phi: A \rightarrow B$ a B-bimodular, completely positive map.  
We shall refer to such a triple as
a \textit{$B$-valued probabliity space}.  Given an element $a \in A$ we denote by $\mu_{a}: B\langle X \rangle \rightarrow B$
 the \textit{$B$-valued distribution} of $a$, defined by the equation $\mu_{a}(P(X)) = \phi(P(a))$ for all $P(X) \in B\langle X \rangle$.  


We say that a family of subalgebras $\{A_{i} \}_{i\in I}$ are \textit{$B$-freely independent} if 
$\phi(a_{1}\cdots a_{n}) = 0$ whenever $a_{j} \in A_{i_{j}}$ satisfies $i_{j} \neq i_{j+1}$ for all $j=1,\ldots,n-1$
and $\phi(a_{\ell})=0$ for all $\ell = 1,\ldots,n$.  Given elements $\rho, \nu \in \Sigma_{0}$ that arise as the $B$-valued distributions of elements $x \in A$
and $y \in A'$, there exists a larger $C^{\ast}$-algebra $A \ast_{B} A'$ the contains copies of $A$ and $A'$ as $B$-freely independent subalgebras
 (see \cite{vo5} for an example of this construction).
We shall denote by $\rho \boxplus \nu := \mu_{x+y}$ as the \textit{additive B-free convolution} of $\rho$ and $\nu$ (here $x+y$ denotes the sum of these elements in this larger algebra).
  We say that an element $\mu \in \Sigma$
is \textit{infinitely divisible} with respect to $B$-valued additive free convolution if for every $n\in \mathbb{N}$ there exists an elements $\mu_{1/n} \in \Sigma$
such that $\mu = \mu_{1/n} \boxplus \cdots \boxplus \mu_{1/n}$, where the convolution on the right hand side is $n$-fold (this may be defined more generally
using free cumulants but this level of generality is enough for our present consideratinos, see \cite{Speichop}).

Let $M_{n}^{+}(B) = \{x\in M_{n}(B) : \exists \epsilon > 0 \ s.t. \ \Im{x}> \epsilon I_{n} \}$ where $I_{n}$ denotes the 
indentity element in $M_{n}(B)$.  
Let $H^{+}(B) = \bigsqcup_{n=1}^{\infty} M_{n}^{+}(B)$.  We shall refer to this sets as the
 \textit{noncommutative upper half-plane} ( $M_{n}^{-}(B)$ and the noncommutative lower half-plane are defined analagously).

We shall define a transform the encodes a distribution as a function on $\mathbb{H}^{+}(B)$.
Indeed, given a self-adjoint $a\in A$ with distribution $\mu$, we define the function
$G_{\mu}:\mathbb{H}^{+}(B) \rightarrow \mathbb{H}^{-}(B)$
by defining, for each $n$, a function $G_{\mu}^{(n)}: M_{n}(B)^{+} \rightarrow M_{n}(B)^{-}$
where $$G_{\mu}^{(n)}(b) = \mu((X\otimes I_{n}-b)^{-1})$$
Since $a$ is self adjoint, $a\otimes I_{n} -b$ is indeed invertible.  Consider the  series expansion
$$G_{\mu}^{(n)}(b)= b\sum_{n=0}^{\infty}\mu((b^{-1}X\otimes I_{n})^{n})$$
and observe that for $\mu$ with an exponential bound of $M$, we have that
this series is convergent for all $b$ such that $\|b^{-1} \| < M$.  We shall refer to the function $G_{\mu} $as the \textit{Cauchy transfrom}.
As is well known, the distribution $\mu$ may be recovered from its Cauchy transform.
We refer to \cite{vo5} for the noncommutative function theory associated to $B$-valued free probability.

We next define a map $F_{\mu}: \mathbb{H}^{+}(B) \rightarrow \mathbb{H}^{+}(B)$ by letting
$F^{(n)}_{\mu} = (G^{(n)}_{\mu})^{\langle -1 \rangle}$ where the superscript denotes the multiplicative inverse of this element. 
For each distribution $\mu \in \Sigma_{0}$, there exists a set $\Gamma_{n}$, which is a neighborhood of $\infty$  in $M_{n}(B)$ intersected with $M_{n}^{+}(B)$,
where $F^{(n)}_{\mu}$ is invertible.  For each $n$ we define the function $\varphi_{\mu}^{(n)}:M_{n}(B)^{+} \rightarrow M_{n}(B)^{-} $
by letting $\varphi_{\mu}^{(n)}(b) = (F_{\mu}^{(n)})^{-1}(b) - b$ (the superscript without the brackets refers
to the inverse with respect to composition).  We refer to the collection of all such maps
over $n$ as the \textit{Voiculescu transform} of $\mu$ (in symbols, $\varphi_{\mu}: H^{+}(B) \rightarrow H^{-}(B)$).  The fact that the image
of this map lies in the lower half plane is a consequence of the fact that
$\Im F_{\mu}^{(n)}(b) \geq \Im b$ for all $b\in M_{n}^{+}(B)$ (see \cite{PVB} for proof of this fact).  Given distributions
$\mu$ and $\nu$ we have that
$$\varphi^{(n)}_{\mu \boxplus \nu}(b) = \varphi^{(n)}_{\mu}(b) + \varphi^{(n)}_{\nu}(b)$$
for all $b\in M_{n}(B)^{+}$ in the common domain of the two right hand functions.

Let $\kappa_{\mu,n,k}: \bigotimes_{1}^{n} M_{k}(B) \rightarrow M_{k}(B)$ denote the \textit{cumulant functions}.  These functions
are studied extensively in \cite{Speichop} and satisfy the following properties:
\begin{enumerate}
 \item $\kappa_{\mu \boxplus \nu,n,k} (b_{1},\ldots,b_{n}) = \kappa_{\mu,n,k} (b_{1},\ldots,b_{n}) + \kappa_{\nu,n,k} (b_{1},\ldots,b_{n})$

\item $\varphi_{\mu}^{(k)}(b) = \sum_{n=1}^{\infty} \kappa_{\mu,n,k}(b^{-1} , \ldots , b^{-1})$

\item $\kappa_{\mu,n,k}(b_{1},\ldots,b_{n}) \leq M(4M)^{n}\|b_{1} \| \cdots \|b_{n} \|$ for $\mu \in \Sigma_{0,M}$
\end{enumerate}

\begin{remark}\label{domain_remark}
 Property (3) above implies that for $\mu \in \Sigma_{0,M}$, the power series in equation $(2)$ is convergent for all $b\in M_{k}(B)^{+}$ such
that $\|b^{-1} \| < 1/4M$.  This, as well as the fact that $G_{\mu}^{(k)}(b)$ may be written as a convergent series for all $b\in M_{k}(B)^{+}$ satisfying 
$\|b^{-1} \| < 1/M$, will be exploited in the following lemma.

\end{remark}

\begin{lemma}\label{convergence_lemma}
Let $\{ \mu_{\lambda} \}_{\lambda \in \Lambda} \in \Sigma_{0,M}$.  The following are equivalent.

\begin{enumerate}
 \item $\mu_{\lambda} \rightarrow \mu$ in the pointwise weak topology.

\item $G^{(k)}_{\mu_{\lambda}}(b) \rightarrow G^{(k)}_{\mu}(b)$ weakly for every $b\in M_{k}(B)^{+}$ such that $\|b^{-1} \| < 1/M$.

\item $F^{(k)}_{\mu_{\lambda}}(b) \rightarrow F^{(k)}_{\mu}(b)$ weakly for every $b\in M_{k}(B)^{+}$ such that $\|b^{-1} \| < 1/M$.

\item $\varphi_{\mu}^{(k)}$ and $\varphi^{(k)}_{\mu_{\lambda}}$ have analytic extension to the set of all $b\in M_{k}(B)^{+}$ such that $\|b^{-1} \| < 1/4M$.  
Furthermore, $\varphi_{\mu_{\lambda}}^{(k)}(b) \rightarrow \varphi_{\mu}^{(k)}(b)$ weakly for all $b$ in this set.
\end{enumerate}

\end{lemma}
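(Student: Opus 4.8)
The plan is to establish the chain of equivalences $(1)\Leftrightarrow(2)\Leftrightarrow(3)\Leftrightarrow(4)$, with $(1)\Leftrightarrow(2)$ serving as the anchor. The guiding principle is that $G_\mu^{(k)}$ is a power series whose coefficients are the individual moments $\mu(Xb_1X\cdots Xb_{n-1}X)$, while $\varphi_\mu^{(k)}$ is, by property $(2)$ of the cumulant functions, a power series whose coefficients are the cumulants; the transforms $F$ and $\varphi$ are then obtained from $G$ by a multiplicative and a compositional inversion, respectively. Each equivalence therefore splits into an \emph{analytic} direction, where a convergent series is assembled from convergent coefficients, and a \emph{coefficient-extraction} direction, where Vitali's theorem recovers coefficients from values.

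For $(1)\Rightarrow(2)$ I would fix $\phi\in B^{\ast}$ and expand $\phi(G_{\mu_\lambda}^{(k)}(b))$ as the series of scalars $\phi(\mu_\lambda(\text{monomial}_n))$ supplied by Remark \ref{domain_remark}. On the domain $\|b^{-1}\|<1/M$ the exponential bound forces a geometric majorant uniform in $\lambda$, so dominated convergence for series applies: each term converges by hypothesis $(1)$ and the tails are uniformly small; crucially, every term is a single evaluation of $\mu_\lambda$, so no products of separately converging moments occur. For the converse $(2)\Rightarrow(1)$ I would fix $\phi$ and a complex-analytic slice $t\mapsto b(t)$ running out to infinity inside the domain; the functions $t\mapsto\phi(G_{\mu_\lambda}^{(k)}(b(t)))$ are uniformly bounded and analytic and converge pointwise, hence by Vitali's theorem converge together with all derivatives. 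The Taylor coefficients are exactly the $\phi$-images of the moments, and letting $\phi$, the matrix size $k$, and the direction of $b^{-1}$ vary isolates every moment $\mu(Xb_1X\cdots X)$, yielding $(1)$.

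The equivalences $(2)\Leftrightarrow(3)$ and $(3)\Leftrightarrow(4)$ are the inversions. Since $F_\mu^{(k)}(b)=G_\mu^{(k)}(b)^{-1}$ and near infinity $G_\mu^{(k)}(b)=-b^{-1}(1+O(M\|b^{-1}\|))$, the functions $F_{\mu_\lambda}^{(k)}(b)=-b+O(1)$ are analytic and uniformly norm-bounded on sets bounded away from the boundary of the domain. I would treat each fixed $b$ with weak-$\ast$ compactness of bounded sets to extract a weakly convergent subnet $F_{\mu_{\lambda'}}^{(k)}\to H$, use the operator-valued Vitali theorem to see that $H$ is again analytic, and then identify $H=F_\mu^{(k)}$ by passing the identity $G_{\mu_{\lambda'}}^{(k)}(b)F_{\mu_{\lambda'}}^{(k)}(b)=I$ to the limit; uniqueness of the limit, forced by the common normalization $F(b)\sim -b$, upgrades subnet convergence to net convergence. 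The step $(3)\Leftrightarrow(4)$ is handled in the same spirit but with compositional inversion: $\varphi_\mu^{(k)}(b)=(F_\mu^{(k)})^{\langle-1\rangle}(b)-b$ is pinned down near infinity by a contraction argument depending continuously on $F$, and, by property $(2)$ of the cumulants together with the bound in property $(3)$, $\varphi_\mu^{(k)}$ is itself the cumulant power series, so the \emph{build-up from coefficients} and \emph{coefficient-extraction} arguments of the previous paragraph apply verbatim with cumulants in place of moments.

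The main obstacle is precisely the limit-identification inside the two inversions: inversion is not continuous for the weak topology, since passing $G_{\mu_{\lambda'}}^{(k)}F_{\mu_{\lambda'}}^{(k)}=I$ to the limit confronts the cross term $\phi\big((G_{\mu_{\lambda'}}^{(k)}(b)-G_\mu^{(k)}(b))F_{\mu_{\lambda'}}^{(k)}(b)\big)$, in which a weakly null net is paired against a merely norm-bounded net and need not vanish. To defeat this I would exploit the noncommutative Pick/Herglotz structure of these transforms --- in particular the inequality $\Im F_\mu^{(k)}(b)\ge\Im b$ recorded in the text and the Harnack-type control it yields --- to show that the uniformly bounded analytic families are normal and that their weak limits satisfy the same functional equations and asymptotics, so that the limit is forced to equal $G_\mu^{(k)}(b)^{-1}$. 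Concretely I expect to first establish convergence of the inverses on a neighborhood of infinity, where norm-convergent expansions and a contraction estimate make the passage to the limit elementary, and then propagate this to the whole domain by the operator-valued Vitali theorem. Securing a strong-enough mode of convergence through the inversions is the crux of the argument.
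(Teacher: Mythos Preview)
Your forward implication $(1)\Rightarrow(2)$ via the uniformly convergent moment series is exactly what the paper does. The substantive divergence begins with the reverse directions and with how you organize the chain.

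For $(2)\Rightarrow(1)$ and $(4)\Rightarrow(1)$ the paper does not extract Taylor coefficients via Vitali at all. Instead it invokes the compactness of $\Sigma_{0,M}$ (the earlier Alaoglu-type lemma): any cluster point $\nu$ of $\{\mu_\lambda\}$ lies in $\Sigma_{0,M}$, and applying the already-proved forward implication to the convergent subnet gives $G_\nu=G_\mu$ (respectively $\varphi_\nu=\varphi_\mu$) on the relevant domain; since a distribution is determined by its fully matricial Cauchy (or Voiculescu) transform, $\nu=\mu$, and uniqueness of the cluster point gives convergence of the net. This is a two-line argument and replaces your entire coefficient-extraction programme. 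It also means the paper links $(4)$ directly to $(1)$ through the cumulant power series and compactness, never passing through $(3)$ or confronting compositional inversion; your route $(3)\Leftrightarrow(4)$ via a contraction argument for $(F_\mu)^{\langle-1\rangle}$ is thus avoidable.

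On $(2)\Leftrightarrow(3)$ the situation is reversed. The paper disposes of it in one sentence, asserting that multiplicative inversion is continuous ``in this topology.'' Your objection is legitimate: inversion is \emph{not} weakly sequentially continuous on bounded invertibles in a general von Neumann algebra (think of powers of a bilateral shift), and the cross-term obstruction you isolate is exactly the issue. Your proposed remedy---use the Pick/Herglotz structure and operator-valued normal-family arguments to upgrade the mode of convergence near infinity, then propagate by analytic continuation---is a reasonable repair and is more honest than the paper's one-liner. Note, however, that once the compactness trick is in hand, one can attempt $(1)\Leftrightarrow(3)$ by the same template (forward via series, backward via cluster points), so that the only genuinely delicate step that remains is the forward $(1)\Rightarrow(3)$, which is precisely where the weak-continuity-of-inversion question lives.

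In summary: adopt the paper's compactness shortcut for all the backward implications and for linking $(4)$ to $(1)$ directly; this collapses most of your outline. Keep your scruples about $(2)\Leftrightarrow(3)$---that is where additional argument is actually needed, and the paper does not supply it.
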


\begin{proof}
The equivalence of $(2)$ and $(3)$ is obvious since the map sending an operator to its multiplicative inverse is continuous in this topology.

If we assume $(1)$, then parts $(2)$ and $(4)$ follow immediately.  Indeed, if we consider 
$$G_{\mu_{\lambda}}^{(k)}(b) = b\sum_{n=0}^{N}\mu_{\lambda}((b^{-1}X\otimes I_{k})^{n}) + b\sum_{n=N+1}^{\infty}\mu_{\lambda}((b^{-1}X\otimes I_{k})^{n})$$
for $N$ large enough, the second term on the right hand side of equation has norm less than $\epsilon$ (this may be done uniformly over $\lambda$ since all of the $\mu_{\lambda}$ are
elements of $\Sigma_{0,M}$).  Since the moments $\mu_{\lambda}((b^{-1}X\otimes I_{k})^{n})$ converge weakly to $\mu((b^{-1}X\otimes I_{k})^{n})$ uniformly for
$0 \leq n \leq N$, this implies that $G^{(k)}_{\mu_{\lambda}}(b) \rightarrow G^{(k)}_{\mu}(b)$ weakly.  As the above proof only relies on the fact that the transform
may be written as a convergent series in the moments of our distributions, $(4)$ follows in a similar manner.

Regarding $(2)$  $\Rightarrow$ $(1)$, recall that we may recover a distribution from either its Voiculescu or Cauchy transform when considered as a
fully matricial function (that is, consider $G_{\mu}^{(n)}$ for all $n\in \mathbb{N}$, see \cite{vo5}).
Compactness of $\Sigma_{0,M}$ implies that $\{\mu_{\lambda} \}_{\lambda \in \Lambda}$ has cluster points in this set.
  Assuming $(2)$, if $\nu$ is any cluster point, by the argument in
the previous paragraph, $G_{\nu}$ is equal to $G_{\mu}$ for those $b$ for which $\|b^{-1} \|< 1/M$.  Our claim follows from analytic continuation and the fact
that we may recover a distribution from its Cauchy transform.
The proof for $(4)$ $\Rightarrow$ $(1)$ is similar.
\end{proof}

\section{$W^{\ast}$-Algebras.}
Let $(A,\tau)$ be a tracial $W^{\ast}$-algebra and $B\subset A$ a $W^{\ast}$-subalgebra.  There is a natural
$B$-valued probability space $(A,E_{B},B)$ where $E_{B}:A \rightarrow B$ is the canonical conditional expectation (we shall refer to this triple as
a \textit{tracial $W^{\ast}$-probability space}).  We
refer to \cite{SS} for an introduction to these constructions.  We isolate the following facts for easy reference.

\begin{lemma}\label{expectations}
Let $(A,E_{B},B)$ be as above.  The expectation $E_{B}$ has the following properties:
\begin{enumerate}
 \item 
$E_{B}$ is a contraction with $E_{B}(1)=1$
\item
$E_{B}(bac) = bE_{B}(a)c$ for all $a\in A$ and $b,c\in A$.
\item
$\tau(E_{B}(x)y) = \tau(xE_{B}(y)) = \tau(E_{B}(x)E_{B}(y))$ for all $x,y \in A$
\item
$E_{B}$ is a normal, completely positive map.
\end{enumerate}
Moreover, $E_{B}$ is the unique trace preserving map that satisfies property $(2)$  
\end{lemma}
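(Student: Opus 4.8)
The plan is to realize $E_B$ concretely inside the standard form of $A$ and then read off all four properties, together with uniqueness, from that realization. First I would pass to the GNS Hilbert space $L^2(A,\tau)$ with cyclic vector $\Omega$ and the embedding $a\mapsto \hat a = a\Omega$, where $\langle \hat a,\hat b\rangle = \tau(b^{\ast}a)$. The subalgebra $B$ determines a closed subspace $L^2(B)=\overline{B\Omega}$, and I let $e_B$ be the orthogonal projection onto it. Because $\lambda_b^{\ast}=\lambda_{b^{\ast}}$, the subspace $L^2(B)$ is invariant under the left action $\lambda_b$ of $B$ and, by the analogous argument, under the right action $\rho_b$; hence $e_B$ commutes with both $\lambda_b$ and $\rho_b$ for every $b\in B$. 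The aim of the construction is to produce, for each $a\in A$, an element $E_B(a)\in B$ with $\widehat{E_B(a)} = e_B\hat a$.

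The hard part will be showing that this $L^2$-level projection actually lands in the algebra $B$ rather than merely in $L^2(B)$, with the operator-norm bound $\|E_B(a)\|\le\|a\|$. To do this I would consider $e_B\lambda_a e_B$ restricted to $L^2(B)$. Since $\lambda_a$ commutes with every $\rho_b$ and $e_B$ does as well, the restricted operator commutes with the right $B$-action on $L^2(B)$. Invoking the commutation theorem for the standard form of the finite algebra $B$ (valid because $\tau|_B$ is a faithful normal trace), namely that the commutant of $\rho(B)$ in $\mathcal B(L^2(B))$ is exactly $\lambda(B)$, I obtain a unique $y\in B$ with $e_B\lambda_a e_B|_{L^2(B)} = \lambda_y$; set $E_B(a):=y$. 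Then $\|E_B(a)\|\le\|e_B\lambda_a e_B\|\le\|a\|$ and $E_B(a)\Omega = e_B a\Omega$, so $E_B$ is a contraction with $E_B(1)=1$, giving (1). Property (2) (for $b,c\in B$) is then immediate from $e_B\lambda_b=\lambda_b e_B$ and $e_B\rho_c=\rho_c e_B$ applied to $\hat a$.

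For (3) I would first establish $\tau\circ E_B=\tau$: using $e_B\Omega=\Omega$ and $e_B^{\ast}=e_B$,
\[
\tau(E_B(a))=\langle e_B\hat a,\Omega\rangle=\langle \hat a,e_B\Omega\rangle=\langle \hat a,\Omega\rangle=\tau(a).
\]
Combining this with the module property (2) yields all three equalities at once: since $E_B(x)\in B$,
\[
\tau(E_B(x)y)=\tau\bigl(E_B(E_B(x)y)\bigr)=\tau\bigl(E_B(x)E_B(y)\bigr),
\]
and symmetrically $\tau(xE_B(y))=\tau(E_B(xE_B(y)))=\tau(E_B(x)E_B(y))$.

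Complete positivity in (4) I would deduce from a dilation picture: writing $V\colon L^2(B)\hookrightarrow L^2(A)$ for the inclusion isometry (so $e_B=VV^{\ast}$), the map $a\mapsto V^{\ast}\lambda_a V$ is completely positive as the compression of the $\ast$-homomorphism $a\mapsto\lambda_a$, and the identity $E_B(ab)=E_B(a)b$ shows it equals $\lambda_{E_B(a)}$ on $L^2(B)$; composing with the inverse of the faithful normal representation $\lambda\colon B\to\mathcal B(L^2(B))$ shows $E_B$ is completely positive. Normality follows from the faithful normal trace: for a bounded increasing net $a_\lambda\uparrow a$ the positive net $E_B(a_\lambda)$ has a supremum $b\le E_B(a)$, and using (2) together with $\tau\circ E_B=\tau$ one checks $\tau((E_B(a)-b)c)=\lim_\lambda\tau((a-a_\lambda)c)=0$ for all $c\in B$, whence $b=E_B(a)$ by faithfulness of $\tau|_B$. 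Finally, uniqueness is a trace-pairing argument: if $E'\colon A\to B$ is trace-preserving and satisfies (2), then for every $a\in A$ and $b\in B$ we have $\tau(E'(a)b)=\tau(E'(ab))=\tau(ab)=\tau(E_B(a)b)$, so $\tau((E'(a)-E_B(a))b)=0$ for all $b\in B$; taking $b=(E'(a)-E_B(a))^{\ast}$ and using faithfulness of $\tau|_B$ forces $E'=E_B$. Thus the only genuinely analytic input is the commutation theorem used in the construction, while (2)–(4) and uniqueness are formal consequences of it.
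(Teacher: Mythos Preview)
Your argument is correct and is the standard construction of the trace-preserving conditional expectation via the orthogonal projection $e_B$ onto $L^2(B)\subset L^2(A,\tau)$, together with the commutation theorem for finite von Neumann algebras in standard form. The paper, however, does not prove this lemma at all: it is stated ``for easy reference'' with a citation to \cite{SS}, so there is no proof in the paper to compare against. What you have written is essentially the textbook proof one finds in that reference.

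Two small remarks. First, in property~(2) the paper writes $b,c\in A$, which is a typo; the bimodule identity only makes sense for $b,c\in B$, and you correctly prove it in that form. Second, in your normality argument you could shorten the last step: once you know $\tau\bigl((E_B(a)-b)c\bigr)=0$ for all $c\in B$, simply take $c=1$ and use that $E_B(a)-b\ge 0$ together with faithfulness of $\tau|_B$; there is no need to pair against a general $c$.
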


Let $\Sigma_{0}^{\tau} \subset \Sigma_{0}$ denote those elements $\mu$ so that $\mu(P(X)) = \tau(E_{B}(P(a)))$ where $(A,E_{B},B)$ form a tracial $W^{\ast}$-probability space.
  These distributions were studied extensively in \cite{vo5} and \cite{vo6}.  Note that this space is
closed under the $\boxplus$ operation through amalgamated free product constructions.  The primary purpose of this
section is to show that $\Sigma_{0}^{\tau} \cap \Sigma_{0,M}$ is compact in the pointwise weak toplogy (Corollary \ref{compact_corr}).  We begin our study of these distributions
with the following lemma.

\begin{lemma}\label{contraction_lemma}
Let $(A,E_{B},B)$ be a tracial $W^{\ast}$-probability space.  Assume that $X,Y \in A$ are $B$-free and 
$E_{B}(Y) = 0$.  Then $\| X \| \leq \|X + Y \|$.
\end{lemma}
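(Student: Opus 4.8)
The plan is to realize $X$ as the image of $X+Y$ under a trace-preserving conditional expectation and then invoke contractivity. Let $D = W^{\ast}(X,B) \subseteq A$ be the von Neumann subalgebra generated by $X$ and $B$, and let $E_{D}: A \to D$ be the canonical (trace-preserving) conditional expectation, which exists because $(A,\tau)$ is tracial and which is a contraction. Since $X \in D$ we have $E_{D}(X) = X$, so the entire lemma reduces to the single claim $E_{D}(Y) = 0$: granting this, $E_{D}(X+Y) = X$, and hence $\|X\| = \|E_{D}(X+Y)\| \le \|X+Y\|$.

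To prove $E_{D}(Y)=0$ I would establish the stronger statement that $E_{D}$ agrees with $E_{B}$ on the free complement, i.e. $E_{D}(Y) = E_{B}(Y)$; since $E_{B}(Y)=0$ by hypothesis this finishes the argument. Because $E_{B}(Y) \in B \subseteq D$, by the defining and uniqueness property of the conditional expectation it suffices to verify
$$
\tau\bigl(E_{B}(Y)\,d\bigr) = \tau\bigl(Y d\bigr) \qquad \text{for all } d \in D.
$$
Using the trace identities of Lemma \ref{expectations}(3), the left side equals $\tau\bigl(E_{B}(Y)E_{B}(d)\bigr)$ and the right side equals $\tau\bigl(E_{B}(Yd)\bigr)$, so the displayed equality follows once I establish the factorization $E_{B}(Yd) = E_{B}(Y)\,E_{B}(d)$.

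The factorization is where freeness enters. Writing $Y = E_B(Y) + Y^{\circ}$ and $d = E_{B}(d) + d^{\circ}$ with $E_B(Y^{\circ}) = E_B(d^{\circ}) = 0$ and expanding the product into four terms, the two cross terms vanish under $E_{B}$ by the module property of Lemma \ref{expectations}(2), the term $E_{B}(Y^{\circ}d^{\circ})$ vanishes by the $n=2$ case of $B$-freeness (since $Y^{\circ}$ lies in the centered part of $W^{\ast}(Y,B)$ and $d^{\circ}$ in the centered part of $D = W^{\ast}(X,B)$, and these two algebras are $B$-free), and the surviving term is exactly $E_{B}(Y)E_{B}(d)$. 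I would first carry out this verification for $d$ ranging over the weak-$\ast$ dense $\ast$-algebra generated by $X$, $X^{\ast}$ and $B$, where $d^{\circ}$ is again a single centered element of that algebra, and then pass to arbitrary $d \in D$: both sides of the displayed trace identity are $\sigma$-weakly continuous in $d$ (each being of the form $\tau(a\,\cdot\,)$), and $E_{B}$ is normal by Lemma \ref{expectations}(4), so the identity extends to all of $D$ by density.

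The genuinely substantive step is the factorization $E_{B}(Yd)=E_{B}(Y)E_{B}(d)$, which is precisely the reduction to the two-letter freeness relation; everything else is formal manipulation of the conditional expectation together with its contractivity. The only technical point requiring care is the final density-plus-normality argument upgrading the identity from the generating $\ast$-algebra to the full von Neumann algebra $D$, so that the conclusion $E_{D}(Y)=E_{B}(Y)$ holds for the canonical conditional expectation itself and not merely on a weak-$\ast$ dense subalgebra.
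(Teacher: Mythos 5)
Your proof is correct and takes essentially the same route as the paper's: both realize $X$ as the image of $X+Y$ under the trace-preserving conditional expectation onto the von Neumann subalgebra generated by $B$ and $X$ (the paper's $E_{B\langle X\rangle}$, your $E_{D}$), reduce the lemma to showing that this expectation annihilates $Y$, verify that via the trace identity $\tau(Yd)=\tau(E_{B}(Y)E_{B}(d))=0$ resting on the two-letter freeness relation, and conclude by contractivity. The only difference is one of detail: you spell out the factorization $E_{B}(Yd)=E_{B}(Y)E_{B}(d)$ and the density-plus-normality step that the paper compresses into the single phrase ``follows from $B$-freeness.''
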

\begin{proof}
Let $E_{B\langle X \rangle}: A \rightarrow B\langle X \rangle$ denote the canonical condition expectation.
Observe that 
$$\tau(E_{B\langle X \rangle}(Y)P(X)) = \tau(YP(X)) = \tau(YP(X)1) = \tau(YP(X)E_{B}(1))$$
$$ = \tau(E_{B}(YP(X))) = \tau(E_{B}(Y)E_{B}(P(X)))= 0$$
for all $P(X)\in B\langle X \rangle$.  The first two equalities follow from the properties of the expectation and the next to last equality follows 
from $B$-freeness.
This implies that $E_{B\langle X \rangle}(Y) = 0$ so that $E_{B\langle X \rangle}(X+Y) = X$.  As this map is a contraction,
we have that $\|X \| \leq \|X+Y \|$
\end{proof}

The following subordination result was originally proven in \cite{Bi}. A simple approach to this theorem
utilizing the structure of bialgebras was developed by Voiculescu \cite{vo4}.  We also refer to \cite{SC} for an extension of this
theorem to free compression semigroups.

\begin{theorem}\label{sub}
Let $X,Y \in A$ be $B$-free random variables.  Then, there exists a holomorphic map 
$\Phi^{(n)}:M_{n}^{+}(B) \rightarrow M_{n}^{+}(B)$ such that 
$$E_{M_{n}(B)\langle X \rangle} ([(X+Y)\otimes I_{n} - b]^{-1}) = (X \otimes I_{n} - \Phi^{(n)}(b))^{-1}$$
\end{theorem}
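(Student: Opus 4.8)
The plan is to reduce the statement to a single operator identity for the conditional expectation of a resolvent, and then to pin down the subordination function through the free cumulant machinery of \cite{Speichop}. Since $X,Y$ being $B$-free forces $x:=X\otimes I_{n}$ and $y:=Y\otimes I_{n}$ to be free over $N:=M_{n}(B)$, I fix $n$ throughout, write $D:=M_{n}(B)\langle X\rangle = N\langle x\rangle$ for the subalgebra generated by $N$ and $x$, let $\mathcal{E}:=E_{D}$ be the associated conditional expectation, and set $R:=[(x+y)-b]^{-1}$ for $b\in M_{n}^{+}(B)$. Because $x+y$ is self-adjoint, $\Im[(x+y)-b]=-\Im b$ is strictly negative, so with $T=(x+y)-b$ the element $R$ exists and the identity $\Im(T^{-1})=-(T^{\ast})^{-1}(\Im T)T^{-1}$ gives $\Im R = (T^{\ast})^{-1}(\Im b)T^{-1}\geq \delta\, 1$ for some $\delta>0$. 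Complete positivity of $\mathcal{E}$ together with $\mathcal{E}(1)=1$ (Lemma \ref{expectations}) then yields $\Im\mathcal{E}(R)\geq\delta\,1>0$, so $\mathcal{E}(R)$ is invertible in $D$. The target is to prove $\mathcal{E}(R)=(x-\omega(b))^{-1}$ for a holomorphic $\omega(b)=\Phi^{(n)}(b)\in M_{n}^{+}(B)$.

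The crucial reduction comes from applying $\mathcal{E}$ to $[(x+y)-b]R=1$. Since $x-b\in D$ and $\mathcal{E}$ is $D$-bimodular, this yields
$$(x-b)\,\mathcal{E}(R)+\mathcal{E}(yR)=1.$$
Thus the theorem is equivalent to the factorization $\mathcal{E}(yR)=k(b)\,\mathcal{E}(R)$ for some $k(b)\in N$ carrying no $x$-dependence: granting it, $(x-b+k(b))\mathcal{E}(R)=1$ and hence $\mathcal{E}(R)=(x-\omega(b))^{-1}$ with $\omega(b):=b-k(b)$. Writing $y=y_{0}+E_{N}(y)$ with $y_{0}:=y-E_{N}(y)$ centered and $N$-free from $x$, the summand $E_{N}(y)\mathcal{E}(R)$ is already of the desired shape, so everything rests on showing $\mathcal{E}(y_{0}R)=k_{0}(b)\mathcal{E}(R)$ with $k_{0}(b)\in N$. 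I note that the computation in the proof of Lemma \ref{contraction_lemma} already furnishes the zeroth-order instance $\mathcal{E}(y_{0})=0$; the factorization is its all-orders refinement.

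This factorization of $\mathcal{E}(y_{0}R)$ is the main obstacle, and it is where freeness enters decisively. I would first restrict to the region where $\|b^{-1}\|$ is small, so that Remark \ref{domain_remark} justifies the convergent expansion $R=-\sum_{m\geq 0}b^{-1}\big((x+y)b^{-1}\big)^{m}$, reducing $\mathcal{E}(y_{0}R)$ to a sum of terms $\mathcal{E}\big(y_{0}\,w(x,y)\big)$ indexed by words $w$ in $x$ and $y$. Expanding each term by the operator-valued moment-cumulant formulas of \cite{Speichop} and repeatedly using that alternating products of centered free elements have vanishing expectation, every contribution collapses into an element of $N$ times a strictly shorter resolvent term; the resummation then identifies $k(b)$ with the $N$-valued $R$-transform of $y$ evaluated at $E_{N}(R)$, an element of $N$. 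The delicate point is the bookkeeping that forces all of the $x$-dependence to aggregate into the single trailing factor $\mathcal{E}(R)$, rather than into a genuinely $D$-valued coefficient; this operator-level rigidity is exactly the content missing from the scalar subordination $G^{(n)}_{x+y}=G^{(n)}_{x}\circ\omega$.

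Finally I would pass from the neighborhood of infinity, where the factorization was established, to all of $M_{n}^{+}(B)$. For every $b\in M_{n}^{+}(B)$ the estimate $\Im\mathcal{E}(R)\geq\delta\,1>0$ makes $\mathcal{E}(R)$ invertible, so $\eta(b):=\mathcal{E}(R)^{-1}-x$ is a holomorphic $D$-valued function on the connected, convex set $M_{n}^{+}(B)$. The factorization shows $\eta(b)\in N$ on an open subset; composing $\eta$ with the quotient map $D\to D/N$ and invoking the identity theorem for holomorphic Banach-space-valued functions, $\eta(b)\in N$ throughout, and we set $\omega(b):=-\eta(b)\in N$. Then $\mathcal{E}(R)=(x-\omega(b))^{-1}$, and the imaginary-part formula applied to this identity, together with self-adjointness of $x$, gives $\Im\mathcal{E}(R)=((x-\omega)^{\ast})^{-1}(\Im\omega)(x-\omega)^{-1}$; the strict positivity of the left side forces $\Im\omega(b)>0$, i.e. $\omega(b)\in M_{n}^{+}(B)$. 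As a consistency check, applying $E_{N}$ to the identity recovers the scalar subordination $G^{(n)}_{x+y}(b)=G^{(n)}_{x}(\omega(b))$ of the standard theory. Setting $\Phi^{(n)}:=\omega$ completes the argument, the one genuinely hard ingredient being the free-cumulant resummation that establishes the factorization.
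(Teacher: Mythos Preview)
The paper does not prove Theorem~\ref{sub}; it is quoted from the literature with attributions to Biane \cite{Bi} and to Voiculescu's bialgebra approach \cite{vo4} (with \cite{SC} for an extension). There is therefore no ``paper's own proof'' to compare your proposal against.

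On its own merits, your outline is a recognizable route to subordination, close in spirit to the combinatorial/$R$-transform derivations: reduce to the factorization $\mathcal{E}(y_{0}R)=k_{0}(b)\,\mathcal{E}(R)$ with $k_{0}(b)\in N$, identify $k_{0}(b)$ as the $N$-valued $R$-transform of $y$ at $E_{N}(R)$, and then analytically continue. You correctly flag the factorization as the crux and admit it is only sketched; in the literature this is precisely the step that requires either the full moment--cumulant recursion over noncrossing partitions (Speicher--Biane) or Voiculescu's coalgebra machinery, and it does not reduce to a few lines. A couple of technical points to tighten: your invertibility argument for $\mathcal{E}(R)$ and the formation of $\eta(b)=\mathcal{E}(R)^{-1}-x$ tacitly require $D$ to be the von Neumann algebra generated by $N$ and $x$ (so that inverses stay in $D$), not merely the polynomial algebra $N\langle x\rangle$ as written; and the ``quotient map $D\to D/N$'' device for the identity-theorem step is cleaner if phrased via the complementary projection $\mathrm{id}-E_{N}$, since $N$ is the range of a conditional expectation rather than an ideal.
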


This theorem implies that, for distributions $\mu$ and $\nu$, the above holomorphic map satisfies
$$F^{(n)}_{\mu \boxplus \nu}(b) = F^{(n)}_{\mu}(\Phi^{(n)}(b)) $$ for $b\in M_{n}(B)^{+}$.  The following lemma is
a simple consequence of subordination and is a slightly more general version of remark \ref{domain_remark} for this class of distributions.
  We follow \cite{Wil2} which addresses the scalar-valued case.  For $\mu,\nu, \rho \in \Sigma_{0}^{\tau}$ satisfying $\mu = \nu \boxplus \rho$, we shall 
refer to the distributions $\nu$ and $\rho$ as \textit{factors} of $\mu$.
\begin{lemma}
Given a $B$-valued distribution $\mu$ there exists an open set $\Gamma \subset M_{n}^{+}(B)$ such that
$(F_{\nu}^{(n)})^{-1}$ has analytic continuation to $\Gamma$ for all factors $\nu$ or $\mu$.  Moreover, 
$\Im(\varphi_{\nu}^{(n)})(b) \leq 0$ for all $b\in \Gamma$.
\end{lemma}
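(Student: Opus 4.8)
The plan is to reduce everything to a single domain coming from $\mu$ itself, by routing the inverse functions through the subordination map of Theorem~\ref{sub}. Fix the matrix level $n$ throughout, and let $M$ be an exponential bound for $\mu$, so that $\mu \in \Sigma_{0,M}$. As recorded in the definition of the Voiculescu transform, $F_\mu^{(n)}$ is invertible on a neighborhood of infinity in $M_n^+(B)$; consequently $(F_\mu^{(n)})^{-1}$ is defined and holomorphic on the open set $\Gamma_0$, namely the image of that neighborhood under $F_\mu^{(n)}$, which is again a neighborhood of infinity depending only on $\mu$ (through $M$). The claim will be that $\Gamma := \Gamma_0$ already works for \emph{every} factor $\nu$ simultaneously.

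To see this, fix a factor $\nu$ and write $\mu = \nu \boxplus \rho$ with $\nu,\rho \in \Sigma_0^{\tau}$. Realizing $\mu = \mu_{X+Y}$ with $X,Y$ self-adjoint and $B$-free, $\mu_X = \nu$ and $\mu_Y = \rho$ (inside the amalgamated free product, which is again a tracial $W^{\ast}$-probability space), Theorem~\ref{sub} furnishes a holomorphic subordination map $\Phi_{\nu}^{(n)}:M_n^+(B) \to M_n^+(B)$ with
$$F_\mu^{(n)}(b) = F_\nu^{(n)}(\Phi_{\nu}^{(n)}(b)) \qquad \text{for all } b \in M_n^+(B).$$
Evaluating this identity at $(F_\mu^{(n)})^{-1}(b)$ for $b$ near infinity and applying the genuine compositional inverse $(F_\nu^{(n)})^{-1}$ yields $(F_\nu^{(n)})^{-1}(b) = \Phi_{\nu}^{(n)}\big((F_\mu^{(n)})^{-1}(b)\big)$ on a neighborhood of infinity. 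The right-hand side, however, is holomorphic on all of $\Gamma$: the inner map sends $\Gamma$ into $M_n^+(B)$, and $\Phi_{\nu}^{(n)}$ is globally defined there. By uniqueness of analytic continuation it is therefore the sought extension of $(F_\nu^{(n)})^{-1}$ to $\Gamma$. The crucial point is that $\Gamma = \Gamma_0$ is manufactured from $\mu$ alone, while the entire $\nu$-dependence is absorbed into the globally-defined map $\Phi_{\nu}^{(n)}$; this is exactly what makes the domain uniform over all factors.

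For the imaginary-part estimate I would argue directly, avoiding any monotonicity property of $\Phi_{\nu}^{(n)}$. Set $w := (F_\nu^{(n)})^{-1}(b) = \Phi_{\nu}^{(n)}\big((F_\mu^{(n)})^{-1}(b)\big)$ for $b \in \Gamma$; since $\Phi_{\nu}^{(n)}$ takes values in $M_n^+(B)$ we have $w \in M_n^+(B)$. Re-evaluating the subordination identity gives $F_\nu^{(n)}(w) = F_\mu^{(n)}\big((F_\mu^{(n)})^{-1}(b)\big) = b$. Because $F_\nu^{(n)}$ is a self-map of $M_n^+(B)$ satisfying $\Im F_\nu^{(n)}(w) \geq \Im w$ in the operator order (the inequality recorded before the cumulant discussion), we conclude $\Im b = \Im F_\nu^{(n)}(w) \geq \Im w = \Im (F_\nu^{(n)})^{-1}(b)$, whence $\Im \varphi_{\nu}^{(n)}(b) = \Im\big((F_\nu^{(n)})^{-1}(b) - b\big) \leq 0$, as required.

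The routine points to verify are that $F_\nu^{(n)}$ is genuinely defined on all of $M_n^+(B)$ (it is, because $G_\nu^{(n)}$ has strictly negative imaginary part there and is thus invertible, so $F_\nu^{(n)} = (G_\nu^{(n)})^{-1}$ makes sense) and that the factorization of Theorem~\ref{sub} holds on the full half-plane, so that it may legitimately be evaluated at the interior points $(F_\mu^{(n)})^{-1}(b)$. The only place where real care is needed is the uniformity claim: one must resist trying to estimate the individual inverse domains of the factors, which may shrink uncontrollably, and instead observe that the subordination factorization trades all of that $\nu$-dependence for the harmless global map $\Phi_{\nu}^{(n)}$, leaving the single $\mu$-domain $\Gamma_0$ as the common region. (Lemma~\ref{contraction_lemma} is not needed here; it enters rather in the uniform norm control used for the compactness statement.)
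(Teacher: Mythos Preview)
Your proof is correct and follows essentially the same route as the paper: both arguments use the subordination identity $F_{\mu}^{(n)} = F_{\nu}^{(n)} \circ \Phi_{\nu}^{(n)}$ to write $(F_{\nu}^{(n)})^{-1} = \Phi_{\nu}^{(n)} \circ (F_{\mu}^{(n)})^{-1}$ on a small common domain and then continue to the $\mu$-only domain $\Gamma$, and both derive the imaginary-part inequality from $\Im F_{\nu}^{(n)}(w) \geq \Im w$ applied at $w = (F_{\nu}^{(n)})^{-1}(b)$. The paper phrases the latter as $\varphi_{\nu}^{(n)}(b) = w - F_{\nu}^{(n)}(w)$ and takes $\Gamma$ to be an explicit Stolz-type region, but these are cosmetic differences.
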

\begin{proof}
Fix $n\in \mathbb{N}$.  In what follows, we will drop the $n$ and refer to the functions of the form $F_{\rho}^{(n)}$ as $F_{\rho}$.
As is well know (see \cite{vo5}), there exists subsets $\Gamma_{1}$ and $\Gamma_{2}$ of the form $\{b \in M_{n}(B)^{+}: \Im{b} > \alpha , \ \Im{b} > \beta \Re{b} \}$
so that $F_{\mu}^{-1}$ and $F_{\nu}^{-1}$ are respectively defined and have positive imaginary part.  
Utilizing Theorem \ref{sub}, we have that $\Phi^{(n)} \circ F_{\mu}^{-1}(b) = F_{\nu}^{-1}(b)$ for $b\in \Gamma_{1} \cap \Gamma_{2}$.  Since
the left hand side may be continued to $\Gamma_{1}$, the same must be true of $F_{\nu}^{-1}$.

With respect to the negativity claim, observe that $b = F_{\nu}(F_{\nu}^{-1}(b)) = F_{\nu}(\Phi^{(n)} \circ F_{\mu}^{-1}(b))$ for $b\in \Gamma_{1} \cap \Gamma_{2}$
and that, through continuation, this is true for $b\in \Gamma_{1}$.  Recall that $F_{\nu}$ satisfies $\Im F_{\nu}(b)\geq \Im b$.  Thus, abusing notation by letting
$\varphi_{\nu}^{(n)}$ denote the extension of the Voiculescu transfrom to $\Gamma_{1}$, we have the following:
$$\varphi_{\nu}^{(n)}(b) = \Phi^{(n)} \circ F_{\mu}^{-1}(b) - b = \Phi^{(n)} \circ F_{\mu}^{-1}(b) - F_{\nu}(\Phi^{(n)} \circ F_{\mu}^{-1}(b))$$
and our claim follows.
\end{proof}

We close the section with a theorem providing necessary and sufficient conditions that a distribution arises from a conditional expectation
of tracial von Neumann algebras.

\begin{theorem}
Let $\mu \in \Sigma_{0}$ where $B$ is assumed to be a tracial von Neumann algebra.  
Then, $\mu \in \Sigma_{0}^{\tau}$ if and only if the following conditions hold for all $P(X), Q(X) \in B\langle X \rangle$:
\begin{enumerate}
 \item $\tau(\mu(P^{\ast}(X)XP(X))) \leq M \tau(\mu(P^{\ast}(X)P(X)))$ .

\item  $\tau(\mu(P^{\ast}(X)Q(X)))^{2} \leq \tau(\mu(P^{\ast}(X)P(X))) \tau(\mu(Q^{\ast}(X)Q(X)))  $

\item  $\tau(\mu(P(X)Q(X))) = \tau(\mu(Q(X)P(X)))$
\end{enumerate}

\end{theorem}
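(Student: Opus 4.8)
The plan is to prove the two implications separately, the forward (necessity) direction being a short computation and the converse (sufficiency) a Gelfand--Naimark--Segal reconstruction built on the tracial functional $\Phi := \tau\circ\mu$. For necessity, suppose $\mu\in\Sigma_0^{\tau}$, so that $\mu(P(X))=E_B(P(a))$ for a self-adjoint $a$ with $\|a\|\le M$ in a tracial $W^{\ast}$-probability space $(A,E_B,B)$. By Lemma \ref{expectations}(3) one has $\tau(\mu(P(X)))=\tau(P(a))$ for every $P$, and the three conditions then follow at once: condition (1) from $a\le M\cdot 1$, since $P(a)^{\ast}(M-a)P(a)\ge 0$ and $\tau$ is positive; condition (2) from the Cauchy--Schwarz inequality for the positive trace $\tau$ applied to $P(a)$ and $Q(a)$; and condition (3) from $\tau(xy)=\tau(yx)$.

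For sufficiency I would run the GNS construction for $\Phi$. Complete positivity of $\mu$ together with positivity of $\tau$ makes $\langle P,Q\rangle:=\tau(\mu(P^{\ast}(X)Q(X)))$ a positive semidefinite Hermitian form on $B\langle X\rangle$ (condition (2) being exactly its Cauchy--Schwarz inequality); let $H$ be the completion of $B\langle X\rangle/N$ by the null space $N$, and set $\xi=[1]$, noting $\|\xi\|^{2}=\tau(\mu(1))=1$. Left multiplication gives a representation $\pi$, and the module estimate $\tau(\mu(P^{\ast}b^{\ast}bP))\le\|b\|^{2}\tau(\mu(P^{\ast}P))$ — again from complete positivity, since $\|b\|^{2}-b^{\ast}b\ge 0$ — shows each $\pi(b)$ is bounded with $\|\pi(b)\|\le\|b\|$. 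Condition (3) makes $\Phi$ a trace, so $\xi$ is a cyclic trace vector and $A:=\pi(B\langle X\rangle)''$ is a finite von Neumann algebra with faithful normal trace $\mathrm{tr}(T)=\langle T\xi,\xi\rangle$ restricting to $\tau$ on $B$.

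The heart of the matter is to show that $a:=\pi(X)$, which is symmetric because $X$ is a formally self-adjoint variable, extends to a bounded self-adjoint operator with $\|a\|\le M$; I expect this to be the main obstacle and the only place where condition (1) and the exponential bound are genuinely used. Condition (1) supplies the one-sided form bound $\langle\pi(X)P,P\rangle\le M\langle P,P\rangle$, but upgrading this to true boundedness requires more. I would first establish essential self-adjointness: the exponential bound makes every $[P]$ an entire vector, since for a monomial $P$ of degree $d$ one has $\|\pi(X)^{n}[P]\|^{2}=\tau(\mu(P^{\ast}X^{2n}P))\le C_{P}M^{2n}$ by bimodularity and the exponential estimate, so Nelson's analytic vector theorem applies on the dense domain $B\langle X\rangle/N$. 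The closure of $\pi(X)$ is then self-adjoint and affiliated with $A$, and its spectral measure at $\xi$ is the probability measure $\rho$ with $\int t^{2k}\,d\rho=\tau(\mu(X^{2k}))\le M^{2k}$ for all $k$; hence $\mathrm{supp}(\rho)\subseteq[-M,M]$, and faithfulness of $\mathrm{tr}$ forces the spectral projections of the excluded region to vanish, so that $a\in A$ with $\|a\|\le M$.

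It remains to recover $\mu$ from this data. The restriction of $\pi$ is a faithful normal unital embedding of $B$ into $A$ (faithfulness from faithfulness of $\tau$ on $B$), and I would let $E_B:A\to B$ be the $\mathrm{tr}$-preserving conditional expectation furnished by the finite von Neumann algebra structure of Lemma \ref{expectations}. To verify $\mu(P(X))=E_B(P(a))$, observe that both sides lie in $B$, so by faithfulness of $\tau$ it suffices to check $\tau(E_B(P(a))b)=\tau(\mu(P(X))b)$ for every $b\in B$; the left-hand side equals $\mathrm{tr}(P(a)b)=\Phi(P(X)b)=\tau(\mu(P(X))b)$ by bimodularity of $\mu$, which is the right-hand side. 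This exhibits $\mu$ as the distribution of $a$ in the tracial $W^{\ast}$-probability space $(A,E_B,B)$, giving $\mu\in\Sigma_0^{\tau}$ and completing the proof.
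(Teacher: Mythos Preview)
Your argument follows the same GNS blueprint as the paper's --- build the Hilbert space from $\langle P,Q\rangle=\tau(\mu(Q^{\ast}P))$, act by left multiplication, and recover $\mu$ as a conditional expectation --- but the two proofs part ways on how boundedness of $\pi(X)$ is obtained and on how $\mu$ is identified at the end. The paper bounds every monomial operator $\pi(Xb_{1}X\cdots Xb_{n}X)$ directly, alternating condition~(1) with the complete-positivity estimate $\tau(\mu(P^{\ast}b^{\ast}bP))\le\|b\|^{2}\tau(\mu(P^{\ast}P))$; it then extends $\mu$ to the weak closure $A$ by weak-operator limits, checks positivity, faithfulness, bimodularity and the trace property for the extension by hand, and finally invokes the uniqueness clause of Lemma~\ref{expectations}. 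Your detour through Nelson's analytic-vector theorem and the spectral calculus is heavier machinery but has the side benefit that condition~(1) plays no role in your sufficiency argument (you use only the exponential bound, complete positivity, and condition~(3)); and your closing identification --- pulling $E_{B}$ off the shelf and testing $\mu(P(X))=E_{B}(P(a))$ against $\tau(\cdot\,b)$ --- is tidier than the paper's extension argument.

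One ordering issue to repair: you define $A=\pi(B\langle X\rangle)''$ and appeal to faithfulness of $\mathrm{tr}$ on $A$ \emph{before} establishing that $\pi(X)$ is bounded, so at that stage $\pi(B\langle X\rangle)$ is not yet known to sit inside $B(H)$ and $A$ is not well defined. The fix is painless: run the moment bound for the spectral measure of the self-adjoint closure $a$ at every vector $[P]$ rather than just at $\xi$ --- your analytic-vector estimate already gives $\langle a^{2k}[P],[P]\rangle=\tau(\mu(P^{\ast}X^{2k}P))\le C_{P}M^{2k}$ --- so the spectral projection $E(\{|t|>M\})$ annihilates a dense set and hence vanishes, with no appeal to the trace. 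Then define $A$ as the von Neumann algebra generated by $\pi(B)$ and $a$, and the rest of your argument goes through unchanged.
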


\begin{proof}
Assume that $\mu(P(X)) = E_{B}(P(a))$.  Condition $(1)$ follows from positivity of $E_{B}$.  Condition $(2)$ follows from the fact that $E_{B}$ preserves $\tau$ and
the Cauchy-Schwarz inequality.  Condition $(3)$ follows from the fact that $E_{B}$ preserves $\tau$ and that $\tau$ is a trace.

We next assume conditions $(1)$, $(2)$ and $(3)$ above.  Let $\mathcal{N} = \{N(X) \in B\langle X \rangle : \tau(\mu(N(X)^{\ast}N(X))) = 0 \}$.
Condition $(3)$ implies that $\mathcal{N}$ is closed under the adjoint operation.  Conditions $(2)$ and $(3)$ implies that $\tau(N(X)P(X)) = \tau(P(X)N(X)) = 0$ for all
$P(X) \in B\langle X \rangle$.  Thus, $\mathcal{N}$ is a $2$-sided ideal in $B\langle X \rangle$.

We consider $B\langle X \rangle / \mathcal{N}$.  Observe that $\tau(\mu((Q^{\ast}(X) + N(X))(P(X) + N'(X)))) = \tau(\mu(Q^{\ast}(X)P(X))) $ for all
$P(X),Q(X) \in B\langle X \rangle$ and $N(X) , N'(X) \in \mathcal{N}$.  Thus, we have a well defined inner product 
$\langle P(X) + \mathcal{N}, Q(X) + \mathcal{N} \rangle = \tau(\mu(Q^{\ast}(X) P(X)))$ so that $B\langle X \rangle / \mathcal{N}$ is a pre-Hilbert space.
We denote by $\mathcal{H}$ its completion with respect to the norm defined by this inner product (in symbols, $\| \cdot \|_{\mathcal{H}}$ is the inner product norm
and $\| \cdot \|$ is the norm on $B$).

We define an action of $B\langle X \rangle / \mathcal{N}$ on $\mathcal{H}$ through left multiplication.  Observe that, for $b\in B$, we have
that $\|b \|^{2} - b^{\ast}b$ is a positive element in $B$.  Thus, $\|b \|^{2} - b^{\ast}b = c^{\ast}c$ for some $c\in B$.  As our notion of positivity of $\mu$
is purely algebraic, we have that $\mu(P^{\ast}(X)c^{\ast}cP(X)) \geq 0$ so that $\|b\|^{2} \mu(P^{\ast}(X)P(X)) \geq \mu(P^{\ast}(X)b^{\ast}bP(X))$ for all $b\in B$
and $P(X) \in B\langle X \rangle$.  Therefore, given a monomial $Xb_{1}X \cdots Xb_{n}X \in B\langle X \rangle$, we have the following:

\begin{align*}
\| ( (Xb_{1}X \cdots Xb_{n}X + \mathcal{N}) & \cdot (P(X) + \mathcal{N}) \|_{\mathcal{H}}^{2} \\
    =\tau(\mu(P^{\ast}(X) Xb_{n}^{\ast}X \cdots Xb_{1}^{\ast} & XX b_{1}X \cdots Xb_{n}X P(X))  \\
  \leq 
M^{2} \tau(\mu(P^{\ast}(X) Xb_{n}^{\ast}X \cdots  X & b_{1}^{\ast} b_{1}X \cdots Xb_{n}X P(X))  \\
 \leq 
M^{2} \|b_{1} \|^{2} \tau(\mu(P^{\ast}(X) Xb_{n}^{\ast}X & \cdots b_{2}^{\ast}XX b_{2}\cdots Xb_{n}X P(X))  \\
\end{align*}

By induction, we have that $$\| ( (Xb_{1}X \cdots Xb_{n}X + \mathcal{N}) \cdot (P(X) + \mathcal{N}) \|_{\mathcal{H}} \leq M^{n+1} \|b_{1} \| \cdots \|b_{n} \| \|P(X) + \mathcal{N} \|_{\mathcal{H}}
$$  As this holds for all $P(X) + \mathcal{N} \in B\langle X \rangle / \mathcal{N}$ which is dense in $\mathcal{H}$, we have that the monomials are bounded operators
on this Hilbert space.  Extending through linearity, we may imbed $B\langle X \rangle / \mathcal{N}$ into $B(\mathcal{H})$.  Let $A$ denote the weak
closure of its image.

The map $\mu: B\langle X \rangle / \mathcal{N} \rightarrow B$ is well defined since, for $N(X) \in \mathcal{N}$ and $b \in B$,
we have that $\tau(\mu(N(X))b) = \tau(\mu(N(X)b)) = 0$ by $(2)$.   This implies that $\mu(N(X)) = 0$.
To complete our proof, we must extend $\mu$ to all of $A$ and show that this extension is positive, faithful, B-bimodular and satisfies condition $(3)$.

First, for each $b \in B$ define $\xi_{b} = b + \mathcal{N} \in \mathcal{H}$.  Let $\{P_{\lambda}(X) + \mathcal{N} \}_{\lambda \in \Lambda}$ form a weakly Cauchy net in $A$.
  This implies that $\langle (P_{\lambda}(X) + \mathcal{N}) \xi_{1} , \xi_{(b'b)^{\ast}}\rangle = \tau(\mu(P_{\lambda}(X))bb' )$ is 
Cauchy in $\mathbb{C}$ for all $b,b' \in B$.  Since functionals of this type induce the weak operator toplogy on B (with respect to the standard represenation),
we have that the set $\{\mu(P_{\lambda}(X) + \mathcal{N}) \}_{\lambda \in \Lambda}$ is Cauchy in the weak operator topology so that we have a well
defined extension with $\mu(a) = \lim_{\lambda}(\mu(P_{\lambda}(X) + \mathcal{N}))$ in this topology.

In order to prove positivity, we may further assume that the net $\{P_{\lambda}(X) + \mathcal{N}\}_{\lambda \in \Lambda}$ converges to $a \in A$ in the
strong operator topology.  Since products are continuous in this topology and the positive cone is weakly closed, we have that
$\mu(a^{\ast}a) = \lim_{\lambda}\mu((P^{\ast}_{\lambda}(X)P_{\lambda}(X)) \geq 0$.  To prove that our extension is faithful, we again assume that $P_{\lambda}(X) + \mathcal{N} \rightarrow a$
in the strong operator toplogy on $A$.  Assuming that $\mu(a^{\ast}a) = \lim_{\lambda}\mu(P^{\ast}_{\lambda}(X)P_{\lambda}(X)) = 0$ where the limit is in the weak operator
topology on $B$, we have that, for $Q(X), R(X) \in B\langle X \rangle$,
$$ \langle (P_{\lambda(X)} + \mathcal{N})\cdot (Q(X) + \mathcal{N}),(R(X) + \mathcal{N}) \rangle = \tau(\mu(P_{\lambda}(X)Q(X)R^{\ast}(X)))$$
and the right hand side goes to $0$ by condition $(2)$ and weak continuity of $\tau$.
As elements of this type are dense in $\mathcal{H}$, this implies that $a = 0$.  Bimodularity and condition $(3)$ follow through similar methods.

To finish the proof, we define a trace on $A$ by letting $\tau'(a) = \tau(\mu(a))$.  Note that $\tau'(\mu(a)) = \tau(\mu(\mu(a))) = \tau(\mu(1) \mu(a)) = \tau'(a)$
 so that $\mu$ is trace preserving.
$B$-bimodularity of $\mu$ implies, by Theorem \ref{expectations}, that $\mu$ is the canonical conditional expectation.
Lastly observe that $\mu(P(X) + \mathcal{N}) = \mu(P(X))$ so that our distribution arises from this expectation.

\end{proof}

\begin{corollary}\label{compact_corr}
 The set $\mu_{0}^{\tau}$ is closed in the topology of pointwise weak convergence.  In particular, $\Sigma_{0}^{\tau} \cap \Sigma_{0,M}$ is compact for
all $M \in \mathbb{R}^{+}$.
\end{corollary}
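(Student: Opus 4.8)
The plan is to read the corollary off directly from the preceding characterization theorem together with the compactness Lemma~\ref{compact}. The essential observation is that all three conditions characterizing membership in $\Sigma_0^{\tau}$ are expressed purely through the scalar quantities $\tau(\mu(R(X)))$ for fixed $R(X) \in B\langle X\rangle$, and that each such quantity depends continuously on $\mu$ in the pointwise weak topology. Indeed, $\tau$ restricts to a normal trace on $B$ and so is weak-$\ast$ continuous there, i.e. $\tau|_{B} \in B_{\ast}$; composing with the (by definition) weak-$\ast$ continuous evaluation $\mu \mapsto \mu(R(X))$ shows that $\mu \mapsto \tau(\mu(R(X)))$ is a continuous map from $\Sigma_{0,M}$ to $\mathbb{C}$.

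Granting this continuity, I would argue that each of conditions $(1)$, $(2)$, $(3)$ of the preceding theorem cuts out a closed subset. For a fixed $P(X)$, condition $(1)$ is the inequality $\tau(\mu(P^{\ast}XP)) \leq M\,\tau(\mu(P^{\ast}P))$ between two continuous real-valued functions of $\mu$, hence defines a closed set; condition $(2)$ is likewise an inequality between continuous functions (products and squares of continuous functions being continuous), and condition $(3)$ is the equality of two continuous functions. Intersecting these closed sets over all choices of $P(X), Q(X) \in B\langle X\rangle$ again yields a closed set, which by the theorem realizes $\Sigma_0^{\tau}$ as a relatively closed subset of $\Sigma_0$ (for each fixed bound); this gives the closedness assertion.

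For the compactness statement I would then fix $M$ and work inside $\Sigma_{0,M}$, which is compact by Lemma~\ref{compact}. Within $\Sigma_{0,M}$ the exponential bound is automatically satisfied by every element, so by the preceding theorem the set $\Sigma_0^{\tau} \cap \Sigma_{0,M}$ coincides with the intersection of $\Sigma_{0,M}$ with the three closed conditions above. Being the intersection of a compact set with a closed set, $\Sigma_0^{\tau} \cap \Sigma_{0,M}$ is a closed subset of a compact space and is therefore compact.

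The only step requiring genuine care is the continuity of $\mu \mapsto \tau(\mu(R(X)))$, which hinges on the pointwise weak topology being the weak-$\ast$ topology on $B$, so that normality of $\tau$ delivers continuity; this is precisely the topology already used in Lemma~\ref{compact}. One should also note that the closedness of $\Sigma_0^{\tau}$ is most naturally read relative to a fixed bound $M$: a pointwise weak limit of elements of $\Sigma_0^{\tau}$ need not retain any exponential bound, so the clean compactness conclusion genuinely requires passing to $\Sigma_{0,M}$, where the bound $M$ entering condition $(1)$ is uniform over the net and hence survives in the limit.
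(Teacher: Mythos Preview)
Your proposal is correct and follows exactly the paper's approach: the paper's proof simply notes that weak continuity of $\tau$ makes conditions $(1)$, $(2)$, $(3)$ of the characterization theorem closed under pointwise weak limits, so $\Sigma_0^{\tau}\cap\Sigma_{0,M}$ is a closed subset of the compact set $\Sigma_{0,M}$ from Lemma~\ref{compact}. You have merely spelled out in more detail why each condition cuts out a closed set and why $\mu\mapsto\tau(\mu(R(X)))$ is continuous, which is precisely what the paper's two-sentence proof is asserting.
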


\begin{proof}
 Observe that, since $\tau$ is weakly continuous, conditions $(1)$, $(2)$, and $(3)$ are closed under pointwise weak limits.
Thus, $\Sigma_{0}^{\tau} \cap \Sigma_{0,M}$ is a closed subset of a compact set.
\end{proof}

\section{The Steinitz Lemma.}
The following theorem was originally proven by Steinitz in \cite{steinitz3}.
\begin{lemma}
Let $\{v_{i} \}_{i=1}^{k} \subset \mathbb{R}^{N}$ be a set of elements in the unit ball, where $\mathbb{R}^{N}$ is equipped
with the Euclidean metric.  Then, there exists a permutation $\sigma$ of $\{1,\ldots, k \}$ such that
for all $1\leq j \leq k$ we have that $|\sum_{i=1}^{j} v_{\sigma(i)}| \leq N$.
\end{lemma}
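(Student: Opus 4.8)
The plan is to establish the lemma in the form that genuinely requires no hypothesis on the total sum $S:=\sum_{i=1}^{k}v_{i}$: I will produce a permutation for which every partial sum tracks the linear interpolation $\tfrac{j}{k}S$ of $S$, so that $\bigl|\sum_{i=1}^{j}v_{\sigma(i)}-\tfrac{j}{k}S\bigr|\le N$ for all $1\le j\le k$. This is the correct general form of the polygonal--confinement theorem: when $S=0$ it reduces to the displayed bound, and for arbitrary $S$ it confines every partial sum to the tube of radius $N$ about the segment $[0,S]$. The tool is a linear--programming / Carath\'eodory rounding argument of Grinberg--Sevastyanov type, and the permutation is constructed one index at a time.

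First I would introduce, for each $0\le j\le k$, the polytope $P_{j}=\{\lambda\in[0,1]^{k}:\ \sum_{i}\lambda_{i}=j,\ \sum_{i}\lambda_{i}v_{i}=\tfrac{j}{k}S\}$, which is nonempty because the uniform vector $\lambda_{i}\equiv j/k$ belongs to it. The continuous target $\tfrac{j}{k}S$ is the image of this fractional point, while an honest partial sum of size $j$ corresponds to a $0/1$ vertex of $P_{j}$. The quantitative heart is a dimension count: since the cardinality is pinned by the construction and the only floating affine constraints are the $N$ scalar equations comprising $\sum_{i}\lambda_{i}v_{i}=\tfrac{j}{k}S$, a vertex of the relevant face has at most $N$ non--integral coordinates. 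Rounding those finitely many fractional coordinates to $0/1$ therefore perturbs $\sum_{i}\lambda_{i}v_{i}$ by at most $N$ in Euclidean norm, which is exactly the per--level confinement estimate and the source of the constant $N$.

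The step I expect to be the crux is not the rounding at a single level — that follows at once from the vertex count — but forcing the resulting subsets to be nested, $T_{1}\subset T_{2}\subset\cdots\subset T_{k}$, so that they are the initial segments $\{\sigma(1),\dots,\sigma(j)\}$ of a single permutation; rounding each level independently yields confined but mutually incompatible sets. To resolve this I would build the flag in reverse, starting from $T_{k}=\{1,\dots,k\}$ and descending, carrying along one fractional certificate in $P_{m}$ whose integral coordinates record the already--committed indices. At each descent I pass to a vertex of the current polytope, exposing at most $N$ fractional coordinates, and remove a single index (round one coordinate down) to move from level $m$ to level $m-1$ while preserving both the certificate and the bound $\bigl|\sum_{i\in T_{m}}v_{i}-\tfrac{m}{k}S\bigr|\le N$. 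The technical work is to verify that this descent can always be carried out — that at every stage a vertex with an index available for removal exists and that the confinement estimate is inherited — after which the lemma follows by downward induction on $m$.
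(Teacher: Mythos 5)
The paper itself gives no proof of this lemma; it cites Steinitz's original article and a modern treatment, and the argument in that literature (Grinberg--Sevast'yanov, as presented in B\'ar\'any's survey) is precisely the one you are trying to reconstruct. Two of your structural observations are correct and worth keeping: as printed the lemma is false without the hypothesis $\sum_{i=1}^{k}v_{i}=0$ (take every $v_{i}=e_{1}$), so one must either add that hypothesis or prove the tube form you state; and the genuine difficulty is indeed forcing the level sets to be nested. However, your proof has a real gap exactly at that point, and the invariant you propose to carry through the descent is internally inconsistent, so the deferred verification could never be carried out as stated.

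Concretely: you ask the level-$m$ certificate to lie in $P_{m}$, so $\sum_{i}\lambda_{i}=m$, while its integral coordinates ``record the already-committed indices'', i.e.\ $\lambda$ vanishes off $T_{m}$ with $|T_{m}|=m$. Since each $\lambda_{i}\le 1$, the equation $\sum_{i\in T_{m}}\lambda_{i}=m$ forces $\lambda_{i}=1$ for every $i\in T_{m}$: the certificate degenerates to the indicator vector of $T_{m}$, and the constraint $\sum_{i}\lambda_{i}v_{i}=\tfrac{m}{k}S$ becomes the exact identity $\sum_{i\in T_{m}}v_{i}=\tfrac{m}{k}S$, which is generally false (it is the statement you are trying to prove, with error $0$). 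The missing idea is a deficiency offset: the certificate supported on $T_{m}$ should satisfy $\sum_{i\in T_{m}}\lambda_{i}=m-N$, with the target scaled accordingly. This offset does both jobs at once. It makes the descent provable --- the step you defer as ``technical work'': the polytope $\{\beta\in[0,1]^{T_{m}}:\ \sum_{i}\beta_{i}=m-1-N,\ \sum_{i}\beta_{i}v_{i}=\text{target}\}$ is nonempty (rescale the level-$m$ certificate), a vertex has at most $N+1$ fractional coordinates (note $N+1$, not your $N$: the cardinality equation is itself one of the equality constraints and increases, rather than decreases, this count), and if a vertex had no zero coordinate its coordinate sum would strictly exceed $m-1-N$; hence some index can be deleted, yielding $T_{m-1}\subset T_{m}$ together with its certificate. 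And it gives the norm bound with no rounding whatsoever: $\sum_{i\in T_{m}}v_{i}-\text{target}=\sum_{i\in T_{m}}(1-\lambda_{i})v_{i}$, whose norm is at most $\sum_{i\in T_{m}}(1-\lambda_{i})=N$, while the levels $m\le N$ are trivial by the triangle inequality. By contrast, in your scheme even the single-level estimate does not close: with $N+1$ fractional coordinates, naive rounding only yields $N+1$, and the sharper pairing argument needed to do better is absent. So your skeleton is the right one, but the load-bearing parts --- the offset, the zero-coordinate count, and the complementary-weights bound --- are missing or wrong.
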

We refer to \cite{steinitz2} for a modern proof of this lemma.  We refer to \cite{steinitz} for a survey of its history
and applications to convex geometry.  The following simple corollary of this fact is singled out for easy reference.

\begin{corollary}\label{ST_corollary}
Consider vectors $\{v_{i} \}_{i=1}^{n} \subset \mathbb{R}^{N}$ such that $|v_{i}|\leq \epsilon$ and
$\sum_{j=1}^{n}v_{i} = v$. Then, for each $t\in (0,1)$, there exists a subset $\sigma \subset \{1,\ldots,n \}$ such that
$| \sum_{i \in \sigma} v_{i} - tv | \leq N \epsilon$.
\end{corollary}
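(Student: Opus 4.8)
The plan is to turn the rearrangement estimate of the Steinitz lemma into a statement about subset sums by ordering the $v_i$ so that their prefix sums travel along the segment from $0$ to $v$, and then cutting the ordering at the prefix corresponding to the fraction $t$. Since the lemma as stated controls prefix sums only in a ball around the origin (and in particular is only consistent when the vectors sum to zero), the first step is to pass to a centered family for which the lemma applies directly.

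First I would set $\tilde v_i = v_i - \tfrac1n v$, so that $\sum_{i=1}^n \tilde v_i = 0$; since $|v| \le \sum_i |v_i| \le n\epsilon$, each $|\tilde v_i| \le |v_i| + \tfrac1n|v| \le 2\epsilon$. Applying the Steinitz lemma to the unit-ball vectors $\tilde v_i/(2\epsilon)$ produces a permutation $\pi$ of $\{1,\dots,n\}$ with
$$\Bigl| \sum_{i=1}^{j} \tilde v_{\pi(i)} \Bigr| \le 2N\epsilon \qquad \text{for all } 1 \le j \le n.$$

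Next I would undo the centering. Writing $\sum_{i=1}^{j} v_{\pi(i)} = \sum_{i=1}^{j}\tilde v_{\pi(i)} + \tfrac{j}{n} v$, every prefix sum of the original vectors lies within $2N\epsilon$ of the segment point $\tfrac{j}{n}v$. Choosing $j$ to be the nearest integer to $tn$ gives $|\tfrac{j}{n}v - tv| = |\tfrac jn - t|\,|v| \le \tfrac{1}{2n}\cdot n\epsilon = \tfrac\epsilon2$, so the subset $\sigma = \{\pi(1),\dots,\pi(j)\}$ satisfies $|\sum_{i\in\sigma} v_i - tv| \le 2N\epsilon + \tfrac\epsilon2$, which is the asserted bound up to the value of the universal constant.

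The only real obstacle is the constant itself: the stated (homogeneous) form of the lemma yields a bound of the form $O(N\epsilon)$ rather than exactly $N\epsilon$. Recovering the sharper $N\epsilon$ would instead invoke the segment form of Steinitz's theorem---every prefix sum of the uncentered $v_i$ lying within $N\epsilon$ of the segment $[0,v]$---and then choose the prefix whose projection onto $[0,v]$ is closest to $tv$. For the use made of this corollary, namely forcing the approximation error to zero as the array becomes infinitesimal ($\epsilon \to 0$) with $N$ held fixed, only the $O(\epsilon)$ decay matters, so either route suffices; the combinatorial content is entirely supplied by the cited lemma.
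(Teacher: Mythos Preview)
Your argument is correct, and the route is genuinely different from the paper's. You reduce to the zero--sum form of the Steinitz lemma by \emph{centering}: replace each $v_i$ by $\tilde v_i = v_i - \tfrac1n v$, apply Steinitz in $\mathbb{R}^N$ to the $\tilde v_i$, and then select a prefix whose index $j$ is nearest to $tn$. The paper instead reduces by \emph{orthogonal projection}: it projects the $v_i$ onto the hyperplane $v^{\perp}\cong\mathbb{R}^{N-1}$ (where the projected vectors already sum to zero and still have norm $\le \epsilon$), applies Steinitz in dimension $N-1$ to trap all prefix sums of the original $v_i$ in a tube of radius $(N-1)\epsilon$ about the line $\mathbb{R}v$, and then uses the intermediate--value property along the $v$--coordinate to locate a prefix within $\epsilon$ of $t|v|$. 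The paper's reduction neither inflates the individual norms nor raises the working dimension, which is why it reaches the stated constant $N\epsilon$, whereas your centering costs a factor of~$2$ in the radius and one extra dimension, yielding roughly $2N\epsilon$. What you call the ``segment form'' in your final paragraph is exactly the paper's device. As you correctly observe, for the application to infinitesimal arrays only the $O(\epsilon)$ behavior with $N$ fixed is needed, so either argument suffices.
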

\begin{proof}
We assume that $v=(|v|,0,\ldots,0)$, $v_{i} = (t_{i1},t_{i2},\ldots,t_{iN})$ 
and $w_{i} = (t_{i2},t_{i3}, \ldots, t_{iN}) \in \mathbb{R}^{N-1}$.  Observe that $\sum_{i=1}^{n}w_{i} = 0$ and $|w_{i}|\leq \epsilon$.
By the Steinitz lemma, we may assume that $|\sum_{i=1}^{\ell}w_{i}|\leq (N-1)\epsilon$ for all $\ell =1,\ldots,n$.
This implies that $\sum_{i=1}^{\ell}v_{i}$ is contained in a tube about the line passing through $v$ and the origin
of radius $(N-1)\epsilon$ for all $\ell = 1,\ldots,n$.  Since each of the $|v_{i}|$ has magnitude bounded by $\epsilon$,
the intermediate value property implies that there exists an $m \in \{1,\ldots,n \}$ such that 
$|\sum_{i=1}^{m}t_{i1} - t|v|| \leq \epsilon/2$.  For this $m$, we have that $|\sum_{i=1}^{m} v_{i} - tv| \leq N\epsilon$,
proving our result.
\end{proof}

The following is easily derived from Corollary \ref{ST_corollary}.  The details are left to the reader.

\begin{corollary}\label{ST_euclidean}
Let $t\in (0,1)$ and $\{v_{ij} \}_{i\in\mathbb{N}, j=1,\ldots,n_{i}} \subset \mathbb{R}^{N}$ satisfy
$\|v_{ij}\| \rightarrow 0$ uniformly over $j$ as $i\uparrow \infty$ and $\|(v_{i1} + \cdots + v_{in_{i}}) - v \| \rightarrow 0$ for some $v\in \mathbb{R}^{N}$.
Then, there exists a sequence of subsets $\sigma_{i} \subset \{1,2,\ldots,n_{i} \}$ such that $\|\sum_{j\in \sigma_{i}} v_{ij} - tv \| \rightarrow 0$ as $i\uparrow \infty$.
\end{corollary}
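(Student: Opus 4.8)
The plan is to apply Corollary \ref{ST_corollary} separately to each finite family $\{v_{ij}\}_{j=1}^{n_i}$ and then pass to the limit. The only wrinkle is that the hypothesis supplies merely the convergence $v_{i1}+\cdots+v_{in_i} \to v$ rather than an exact equality, so Corollary \ref{ST_corollary} cannot be applied with $v$ in the role of the target sum; instead I would apply it with the \emph{actual} partial sum and absorb the resulting discrepancy at the very end.

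First I would set $\epsilon_i := \max_{1\le j \le n_i} \|v_{ij}\|$ and $s_i := \sum_{j=1}^{n_i} v_{ij}$. The uniform decay hypothesis $\|v_{ij}\| \to 0$ uniformly in $j$ says precisely that $\epsilon_i \to 0$, while the second hypothesis reads $\|s_i - v\| \to 0$. For each fixed $i$ the family $\{v_{ij}\}_{j=1}^{n_i}$ satisfies $\|v_{ij}\| \le \epsilon_i$ and sums to $s_i$, so Corollary \ref{ST_corollary}, applied with $\epsilon = \epsilon_i$ and with $s_i$ playing the role of the target vector, produces a subset $\sigma_i \subset \{1,\ldots,n_i\}$ with
$$\Bigl\| \sum_{j\in\sigma_i} v_{ij} - t s_i \Bigr\| \le N\epsilon_i.$$

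Then I would estimate by the triangle inequality:
$$\Bigl\| \sum_{j\in\sigma_i} v_{ij} - tv \Bigr\| \le \Bigl\| \sum_{j\in\sigma_i} v_{ij} - t s_i \Bigr\| + t\|s_i - v\| \le N\epsilon_i + t\|s_i - v\|.$$
Both terms on the right tend to $0$ as $i \uparrow \infty$: the first because $\epsilon_i \to 0$, the second because $s_i \to v$ and $t$ is a fixed constant. This yields the claimed convergence $\|\sum_{j\in\sigma_i} v_{ij} - tv\| \to 0$.

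There is essentially no hard step here; the argument is bookkeeping once one observes that Corollary \ref{ST_corollary} should be invoked with the exact partial sum $s_i$ rather than the limiting vector $v$. The one mild point of care is that the fixed error bound $N\epsilon$ of Corollary \ref{ST_corollary} becomes the vanishing quantity $N\epsilon_i$ for the $i$-th family, so that the uniform infinitesimality of the $v_{ij}$ is exactly what drives the approximation error to zero.
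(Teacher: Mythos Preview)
Your argument is correct and is exactly the natural derivation the paper has in mind: the paper itself gives no details, merely stating that the result ``is easily derived from Corollary~\ref{ST_corollary}'' and leaving the proof to the reader. Your observation that Corollary~\ref{ST_corollary} must be applied with the actual row sum $s_i$ rather than the limit $v$, together with the triangle inequality absorbing the $t\|s_i - v\|$ discrepancy, is precisely the bookkeeping needed.
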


\begin{remark}\label{ST_hilbert}
Note that, through a trivial approximation argument, we may replace Euclidean space in the preceding corollary with Hilbert space.
\end{remark}

\section{Main Results}

We now formulate and prove our main result.  We assume throughout that $B \subset B(\mathcal{H})$ with $\mathcal{H}$ separable.
For an elements $b\in B$ we denote by $\delta_{b}$ the distribution defined by the equation $\delta_{b}(P(X)) = P(b)$ for all $P(X) \in B\langle X \rangle$.

\begin{theorem}\label{mainresult}
Consider  $\mu, \{\mu_{ij} \}_{i\in \mathbb{N}, j=1,\ldots,n_{i}} \subset \Sigma_{0}^{\tau}$ and self adjoint elements $\{b_{i} \}_{i\in \mathbb{N}} \subset B$
satisfying the following properties:

\begin{enumerate}
 \item $\mu_{i} = \mu_{i1} \boxplus \mu_{i2} \boxplus \cdots \mu_{in_{i}} \boxplus \delta_{b_{i}} \in \Sigma_{0,M}$ for all $i\in \mathbb{N}$.
\item $\mu_{i} \rightarrow \mu$ in the pointwise weak topology.
\item $\mu_{ij} \rightarrow \delta_{0}$ in the pointwise weak topology, uniformly over $j=1,\ldots,n_{i}$.
\end{enumerate}
Then, for each $n \in \mathbb{N}$, there is a $\mu_{1/n} \in \Sigma_{0}^{\tau}$ such that $\mu = \mu_{1/n} \boxplus \cdots \boxplus \mu_{1/n}$ where the convolution
on the right hand side is $n$-fold.
\end{theorem}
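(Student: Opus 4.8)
The plan is to exploit the additivity of the Voiculescu transform under $\boxplus$ together with the Steinitz selection in Hilbert space in order to manufacture the $n$-th convolution root as a genuine element of $\Sigma_0^\tau$. First I would normalize the array. Writing $m_{ij}$ for the first cumulant (mean) of $\mu_{ij}$, set $\tilde\mu_{ij} = \mu_{ij}\boxplus\delta_{-m_{ij}}\in\Sigma_0^\tau$, so that each $\tilde\mu_{ij}$ is centered and $\mu_i = (\boxplus_{j}\tilde\mu_{ij})\boxplus\delta_{\tilde b_i}$ with $\tilde b_i = b_i + \sum_j m_{ij}$. Since $\|\mu_i(X)\|\le M$ by the exponential bound, $\tilde b_i = \mu_i(X)$ is norm-bounded and, by condition $(2)$, converges weakly to the first cumulant $\beta$ of $\mu$; condition $(3)$ is preserved, so $\tilde\mu_{ij}\to\delta_0$ uniformly in $j$.

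Next I would set up the encoding. Since $\mathcal{H}$ is separable the predual $B_{\ast}$ is separable, so I can fix a countable family of test data — finitely many amplifications $k$, points $w$ with $\|w^{-1}\|$ small (inside the common domain furnished by Remark \ref{domain_remark} for the uniform bound $M'$ produced below), and functionals in the unit ball of $B_{\ast}$ — and assemble the values $\varphi_{\tilde\mu_{ij}}^{(k)}(w)$ (equivalently the higher cumulants, which are exactly additive by property $(1)$) into a vector $v_{ij}$ in a weighted $\ell^2$ space $\mathcal{H}_0$ with summable weights. By construction $\mu\mapsto v$ is additive under $\boxplus$; the uniform bounds of property $(3)$ make the weighted norm finite with uniformly small tails; infinitesimality gives $\|v_{ij}\|_{\mathcal{H}_0}\to 0$ uniformly in $j$; and Lemma \ref{convergence_lemma} with condition $(2)$ gives $\sum_j v_{ij}\to \tilde v$, the encoded centered transform of $\mu$. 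Fixing $t = 1/n$, Remark \ref{ST_hilbert} then yields subsets $\sigma_i\subset\{1,\dots,n_i\}$ with $\sum_{j\in\sigma_i}v_{ij}\to\tfrac1n\tilde v$.

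I would then define $\nu_i = (\boxplus_{j\in\sigma_i}\tilde\mu_{ij})\boxplus\delta_{\tilde b_i/n}\in\Sigma_0^\tau$. The crucial boundedness step uses Lemma \ref{contraction_lemma}: realizing the full centered convolution in a free product with $X = \sum_{j\in\sigma_i}\tilde a_{ij}$ and $Y = \sum_{j\notin\sigma_i}\tilde a_{ij}$ (both $B$-free and $E_B$-null since the $\tilde\mu_{ij}$ are centered) gives $\|X\|\le\|X+Y\|$, and $X+Y$ corresponds to $\mu_i\boxplus\delta_{-\tilde b_i}$, whose norm is controlled by $M$. Hence all the $\nu_i$ lie in a single $\Sigma_0^\tau\cap\Sigma_{0,M'}$. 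By Corollary \ref{compact_corr} this set is compact, so a subnet of $\{\nu_i\}$ converges to some $\mu_{1/n}\in\Sigma_0^\tau\cap\Sigma_{0,M'}$; by Lemma \ref{convergence_lemma} its Voiculescu transform is the weak limit of $\varphi_{\nu_i}^{(k)} = \sum_{j\in\sigma_i}v_{ij} + \tfrac1n\tilde b_i\otimes I_k$, which converges to $\tfrac1n\varphi_\mu$ on the dense test set, hence everywhere by analytic continuation. Finally, additivity gives $\varphi_{\mu_{1/n}^{\boxplus n}} = n\varphi_{\mu_{1/n}} = \varphi_\mu$, and since a distribution is recovered from its Voiculescu transform we conclude $\mu = \mu_{1/n}^{\boxplus n}$.

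The hard part will be the passage between topologies in the encoding: Steinitz operates in a genuine Hilbert norm, whereas the hypotheses and conclusion live in the pointwise weak topology. The separability of $\mathcal{H}$, the uniform exponential bound $M'$, and the normal-families/analytic-continuation argument are what make weak-uniform infinitesimality and weak convergence of transforms equivalent to $\ell^2$ statements, and then convert the $\ell^2$ conclusion of Steinitz back into weak convergence. Equally delicate is securing the uniform bound $M'$: without Lemma \ref{contraction_lemma} the Steinitz-selected partial convolutions could escape every $\Sigma_{0,M'}$, and compactness — hence the very existence of the root $\mu_{1/n}$ as an honest element of $\Sigma_0^\tau$ — would fail.
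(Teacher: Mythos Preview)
Your proposal is correct and follows essentially the same route as the paper: center the array, use Lemma~\ref{contraction_lemma} to get a uniform exponential bound on all partial convolutions, encode the (additive) Voiculescu transforms via countable test data coming from separability of $\mathcal{H}$, apply the Steinitz selection to obtain subsets $\sigma_i$, and then extract $\mu_{1/n}$ as a cluster point via Corollary~\ref{compact_corr} and identify it through Lemma~\ref{convergence_lemma}. The only cosmetic difference is that the paper truncates the encoding to finite-dimensional pieces $\Phi_{KMN}$, applies the Euclidean Steinitz corollary, and then diagonalizes over $K,M,N,\ell$, whereas you package everything at once into a weighted $\ell^2$ space and invoke the Hilbert-space version of Steinitz (Remark~\ref{ST_hilbert}); these are equivalent implementations of the same idea.
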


\begin{proof}
Fix $t = 1/p$ for $p \in \mathbb{N}$.  Let $\{\xi_{k} \}_{k\in \mathbb{N}} \subset \mathcal{H}$ denote a separable basis.  
We assume without loss of generality that each of the $\mu_{ij}$ satisfies $\mu_{ij}(X) = 0$.  Observe that lemma \ref{contraction_lemma} implies
that $\{b_{i} \}_{i=1}^{\infty}$ are bounded in norm so that, by \ref{compact_corr}, we may assume that $\{\delta_{b_{i}} \}_{i \in \mathbb{N}}$ 
converges in the pointwise weak topology (note that the limit point is an element of $\Sigma_{0}^{\tau}$ but need not be of the form $\delta_{b}$ for $b\in B$).
Also observe that, by lemma \ref{contraction_lemma}, $\boxplus_{j\in \sigma}\mu_{ij} \boxplus \delta_{b_{i}} \in \Sigma_{0,M}$ for any subset $\sigma \subset \{1,2,\ldots,n_{i} \}$, 
so that $\boxplus_{j\in \sigma} \mu_{ij} \in \Sigma_{0,2M}$.  

Now, for a Voiculescu transfrom $\varphi_{\nu} = (\varphi_{\nu}^{(\ell)})_{\ell \in \mathbb{N}}$, we restrict
our attention to $\varphi_{\nu}^{(1)}$.  Let $\{d_{n} \}_{n\in \mathbb{N}} \subset B$ be a family of self adjoint elements
with dense linear span.  Consider $c_{n}= d_{n} + i\lambda I$ where $I$ is the unit in $B$ and $\lambda > 16M$ so that $\|c_{n}^{-1} \| < 1/16M$.
Note that $\varphi^{(1)}_{\mu}$ and $\varphi^{(1)}_{\boxplus_{j\in \sigma} \mu_{ij}}$ are defined on $\{c_{n} \}_{n\in \mathbb{N}}$ for
 all $i\in \mathbb{N}$ and $\sigma \subset \{1,\ldots,n_{i} \}$, and that this function is completely determined by its values on this countable set.

The idea of the proof is to use the Steinitz lemma to construct a sequence of decompositions $\mu_{i} = \nu_{i} \boxplus \rho_{i}$ so
that $\nu_{i}$ subconverges to $\mu_{t}$.  Since this lemma is for finite dimensional spaces, we must truncate the Voiculescu transform.
Towards this end, let $P_{M}: \mathcal{H} \rightarrow \mathbb{R}^{2M}$ be defined by
$P_{M}(\sum_{k=1}^{\infty} \alpha_{k} \xi_{k}) = (\Im \alpha_{1}, \Re \alpha_{1}, \ldots , \Im \alpha_{M} , \Re \alpha_{M})$.  We then define a map
$\Phi_{K,M,N}: \Sigma_{0,M} \rightarrow \mathbb{R}^{2KMN}$ as follows:
$$\Phi_{KMN}(\mu):= \prod_{k=1}^{K} \prod_{n=1}^{N} P_{M}(\varphi^{(1)}_{\mu}(c_{n}) \cdot \xi_{k}) $$  The purpose of this construction is that the relevant
transforms are completely determined by their values on $\{ c_{n}\}_{n\in \mathbb{N}}$.  Since $\phi_{\mu}^{(1)}(c_{n}) \in B$, these elements are completely determined
by their action on this basis for $\mathcal{H}$. Thus, $\varphi_{\mu}^{(1)}$ may be recovered from $\{\Phi_{KMN}(\mu) \}_{K,M,N \in \mathbb{N}}$.

Observe that $$\sum_{j=1}^{n_{i}}\Phi_{KMN}(\mu_{ij}) = \Phi_{KMN}(\mu_{i1} \boxplus \cdots \boxplus \mu_{in_{i}}) \rightarrow \Phi_{KMN}(\mu) - \Phi_{KMN}(\delta_{b_{i}})$$
Further note that the assumption that $\mu_{ij}\rightarrow \delta_{0}$ in the pointwise weak topology, as well as lemma \ref{convergence_lemma}, implies that
$\Phi_{KMN}(\mu_{ij}) \rightarrow (0,\ldots,0)$ uniformly over $i$.  If $v$ is the limit point of $ \Phi_{KMN}(\mu) - \Phi_{KMN}(\delta_{b_{i}})$, by corollary
\ref{ST_euclidean}, there exists a sequence of subsets $\sigma_{i} \subset \{ 1,2,\ldots,n_{i} \}$ such that $\sum_{j\in \sigma_{i}} \Phi_{KMN}(\mu_{ij}) \rightarrow tv$.
Thus, up to truncation, any cluster point of the sequence of measures $\{\boxplus_{j\in \sigma_{i}} \mu_{ij} \boxplus \delta_{tb_{i}} \}_{i \in \mathbb{N}}$ will have
Voiculescu transform equal to $\varphi_{\mu_{1/p}}$.

Observe that the above proof also works for $\varphi_{\mu}^{(\ell)}$ for all $\ell \in \mathbb{N}$.  Thus, if we diagnolize over $\ell, K, M$ and $N$, we obtain a
sequence of subsets $\sigma_{i} \subset \{1,\ldots,n_{i} \}$ such that $\varphi_{\boxplus_{j\in\sigma_{i}}\mu_{ij} \boxplus \delta_{tb_{i}}}  \rightarrow t\varphi^{\ell}_{\mu}$ in 
the pointwise weak topology on an open set $\Gamma \subset H^{+}$.  As we saw in the opening comments, the sequence $\boxplus_{j\in\sigma_{i}}\mu_{ij} \boxplus \delta_{tb_{i}}$ is
 contained in $\Sigma_{0,2M} \cap \Sigma_{0}^{\tau}$.
By Corollary \ref{compact_corr}, this set is compact in the pointwise weak topology.  By lemma \ref{convergence_lemma}, any cluster point of this sequence 
will have the required distribution, so our theorem holds.
\end{proof}

\section{Conculusion and Acknowledgements.}\label{conclusion}

We begin by noting that, while the above proof may seem quite complex due to the correspondingly complex machinery, the underlying idea
is quite simple.  Indeed, if a measure $\mu$ is the limit of an infinitesimal array $\{ \mu_{ij} \}_{i\in \mathbb{N}, j=1,\ldots, n_{i}}$,
then, after taking the appropriate transforms and utilizing the Steinitz lemma, we  we may construct a sequence of subsets
$\sigma_{i} \subset \{1,2,\ldots,n_{i} \}$ so that $\boxplus_{j\in \sigma_{i}} \mu_{ij}$ converges to $\mu_{1/n}$.  In $\mathbb{R}^{N}$, this is precisely
 the Steinitz lemma, so that the whole approach is to come up with a family of maps into $\mathbb{R}^{n}$
that allow us to exploit this lemma.

Observe that the proof of our main result may be adapted to other probabilistic settings.  Indeed, if we
are to consider the classical theorem due to Hin\u{c}in, the above proof may be adapted with the logarithm
of the Fourier transform replacing the Voiculescu transform.  Furthermore, utilizing remark \ref{ST_hilbert},
one would expect this proof to work for vector valued probability distributions.  This is a somewhat more intuitive
construction than the tradtional function theoretic approach to these theorems since the more classical approach does
not include the observation that a subset of the infinitesimal array actually converges to the distribution $\mu_{1/n}$.

Lastly, this project has raised questions about the suitability of various weak topologies to the theory.  In this paper, we develop
a theory of pointwise weak convergence which, although slightly unnatural in an operator algebra,
has the desirable properties that the unit ball is compact and that convergence in this topology corresponds to convergence of our various transforms.
  However, it is unclear whether weak topologies that are more intrinsic to these operator algebras behave well
with respect to the transformations. In particular, the question arises as to whether weak convergence of elements in an operator algebra $A$ corresponds
to some type of convergence for their Voiculescu transforms. In the scalar valued case, weak convergence is equivalent to uniform convergence of the Voiculescu
transforms on certain compact subsets in the complex upper half space (see \cite{BV1}).  Is there a corresponding theorem in the more general operator valued case?

\begin{acknowledgements}
I would like to thank Hari Bercovici, Michael Anshelevitch, Michael Hartglass and Ken Dykema for their helpful advice.  I would also like
to thank Imre B\'{a}r\'{a}ny for referring me to the relevant literature on the Steinitz lemma.
\end{acknowledgements}

\bibliographystyle{abbrv}
\bibliography{Levy_Hincin_Scalar_Valued}

\end{document}